\let\FormatGMIG=G
\let\FormatNormal=N
\let\format=\FormatNormal
	\newcommand\CompilationAuthor[1]{}
	\newcommand\CompilationTitle[1]{}
\def\blfootnote{\gdef\@thefnmark{}\@footnotetext}
\newcommand\reallywidehat[1]{%
\savestack{\tmpbox}{\stretchto{%
  \scaleto{%
    \scalerel*[\widthof{\ensuremath{#1}}]{\kern-.6pt\bigwedge\kern-.6pt}%
    {\rule[-\textheight/2]{1ex}{\textheight}}
  }{\textheight}%
}{0.5ex}}%
\stackon[1pt]{#1}{\tmpbox}%
}
\newlength\bshft
\def\fakebold#1{\ThisStyle{\ooalign{$\SavedStyle#1$\cr%
  \kern-\bshft$\SavedStyle#1$\cr%
  \kern\bshft$\SavedStyle#1$}}}
\newtheorem*{rep@theorem}{\rep@title}
\newcommand{\newreptheorem}[2]{%
\newenvironment{rep#1}[1]{%
 \def\rep@title{#2 \ref{##1}}%
 \begin{rep@theorem}}%
 {\end{rep@theorem}}}
\theoremstyle{plain}
\newtheorem{theorem}{Theorem}
\newtheorem{mainthm}{Theorem}
\newtheorem{proposition}[theorem]{Proposition}
\newtheorem{lemma}[theorem]{Lemma}
\newtheorem{corollary}[theorem]{Corollary}
\theoremstyle{definition}
\newtheorem*{definition}{Definition}
\theoremstyle{remark}
\newtheorem*{remark}{Remark}
\numberwithin{theorem}{section}
\numberwithin{example}{section}
\numberwithin{figure}{section}
\DeclareMathOperator\AC{AC}
\numberwithin{equation}{section}
\newcommand\nobelowdisplayskip
\definecolor{yorange}{rgb}{1,0.8,0}
\let\d\bdy 						
\DeclareMathOperator\WF{WF}		
\let\gradient\nabla				
\newcommand\En{\mathbf E}		
\newcommand\KE{\mathbf{KE}}		
\newcommand\DT{\ensuremath{_{\text{\normalfont DT}}}}		
\newcommand\RDT{\mathbf{DT}}				
\newcommand\dt{\mathbf{dt}}						
\newcommand\bgamma{\boldsymbol\gamma}
\newcommand\JCS{J_{\text C\shortrightarrow \text S}}		
\newcommand\JCB{J_{\text C\shortrightarrow \bdy}}		
\newcommand\JBS{J_{\bdy\shortrightarrow \text S}}		
\newcommand\JBB{J_{\bdy\shortrightarrow \bdy}}			
\newcommand\subT{_{\text T}}						
\newcommand\eqml{\Eq}					
\newcommand\Garding{G\aa{}rding}
\DeclareMathOperator\cone{cone}
\let\abs\anundefinedmacro
\let\norm\anundefinedmacro
\let\form\anundefinedmacro
\let\set\anundefinedmacro
\DeclarePairedDelimiter\abs\lvert\rvert
\DeclarePairedDelimiter\norm\lVert\rVert
\DeclarePairedDelimiter\form\langle\rangle
\DeclarePairedDelimiterX\set[2]{\{}{\}}{#1\,\delimsize\vert\,#2}
\newcommand\indicator{\mathbf 1}
\let\hat\widehat
\let\phi\varphi
\let\Re\RRe
\title{\vspace{-1em}Reconstruction of piecewise smooth wave speeds using multiple scattering}
\author{Peter Caday$^{\text{*}}$\!,\,
		Maarten V.~de Hoop$^{\text{\textdagger}}$\!,\,
		Vitaly Katsnelson$^{\text{\textdaggerdbl}}$\!,\, and
		Gunther Uhlmann$^\parallel$}
\date\today
\begin{document}

\if\format\FormatGMIG
		\GMIGTitle
\else
		\maketitle
\fi

\begin{abstract}
	Let $c$ be a piecewise smooth wave speed on $\RR^n$, unknown inside a domain $\Omega$. We are given the solution operator for the scalar wave equation $(\d_t^2-c^2\Delta)u=0$, but only outside $\Omega$ and only for initial data supported outside $\Omega$. Using our recently developed scattering control method, we prove that piecewise smooth wave speeds are uniquely determined by this map, and provide a reconstruction formula. In other words, the wave imaging problem is solvable in the piecewise smooth setting under mild conditions. We also illustrate a separate method, likewise constructive, for recovering the locations of interfaces in broken geodesic normal coordinates using scattering control.
\end{abstract}

\section{Introduction and background}			\label{s:intro}

\blfootnote{\!\!$^{\text{*}}$ \url{pac5@rice.edu}\qquad $^{\text{\textdagger}}$ \url{mdehoop@rice.edu} \qquad $^{\text{\textdaggerdbl}}$ \url{vk17@rice.edu} \qquad $^\parallel$ \url{gunther@math.washington.edu}}
\blfootnote{\!\!$^{\text{*}}$ $^{\text{\textdagger}}$ $^{\text{\textdaggerdbl}}$ Department of Computational and Applied Mathematics, Rice University.}
\blfootnote{\!\!$^{\text{\textdagger}}$ Department of Earth, Environmental, and Planetary Sciences, Rice University.}
\blfootnote{\!\!$^\parallel$ Department of Mathematics, University of Washington and Institute for Advanced Study, Hong Kong University of Science and Technology.}

The wave inverse problem asks for the unknown coefficient(s), representing wave speeds, of a wave equation inside a domain of interest $\Omega$, given knowledge about the equation's solutions (typically on $\bdy\Omega$). Traditionally, the coefficients are smooth, and the data is the Dirichlet-to-Neumann map, or its inverse. The main questions are uniqueness and stability: can the coefficients be recovered from the Dirichlet-to-Neumann map, and is this reconstruction stable relative to perturbations in the data? In the smooth case, the uniqueness question was answered in the affirmative by Belishev \cite{Belishev1987}, using the boundary control method introduced in that same article. Logarithmic type stability estimates were proven in \cite{BLK} for a related problem for the wave equation with a smooth sound speed or metric. Using geometric optics, Stefanov and Uhlmann~\cite{SU-Stability} show H\"older type stability for the case of simple wave speeds; recently, Stefanov, Uhlmann, and Vasy~\cite{SUV-PartialStability} also proved uniqueness, H\"older stability and reconstruction under a foliation condition, utilizing their work on the local geodesic ray transform~\cite{UVasyLocalRay}. Some work has also been done on the piecewise smooth case; e.g., Hansen~\cite{Hansen91}, assuming the background speed is known. In~\cite{KLU} it is shown that from the broken scattering relation for smooth Riemannian metrics one can determine the metric. This assumes, for the case of the sound speed, a dense number of discontinuities of the speed. For more details see \cite{Hansen91} and~\cite{KLU}.

In this paper, we show uniqueness also holds for piecewise smooth wave speeds with conormal singularities, under very mild geometric conditions, using our recently developed scattering control method~\cite{SC}. We consider the particular wave equation
\begin{equation}
	(\d_t^2 - c^2\Delta)u=0.
	\label{e:wave-eqn}
\end{equation}
Instead of using the Dirichlet-to-Neumann map, we take a slightly different (but equivalent) initial value approach: the domain is extended from $\Omega$ to $\RR^n$ and the data is the solution operator for the wave equation, but only outside $\Omega$, and only for initial data supported outside $\Omega$. The idea behind scattering control is to find, for given initial data, extra trailing initial data which allows us, indirectly, to isolate the portion of the wave field at a certain time and depth inside $\Omega$. Under appropriate geometric conditions, this portion of the wave field is free of multiple reflections arising from discontinuities in $c$ and is spatially concentrated.

These two properties of scattering control lead immediately to two strategies for the inverse problem.
The first property, spatial concentration, leads to a constructive uniqueness result inspired by a harmonic function reconstruction method of Belishev and Blagovestchenskii~\cite{Belishev97}. The key idea is to take inner products of increasingly concentrated wave fields with Euclidean coordinate functions (which are stationary for equation~\eqref{e:wave-eqn}): this allows us to convert boundary normal coordinates to Euclidean coordinates, from which $c$ can be recovered. This leads to Theorem~\ref{t:bnc-to-Euclidean}, which provides a reconstruction formula for $c$ in terms of a function $\kappa$ that can be computed by scattering control. The precise statement of the theorem involves several technical definitions which in the interests of brevity we will defer to later sections. 

Before stating the theorems, let us describe our given data, which comes in the form of an \emph{outside measurement operator} akin to the Dirichlet-to-Neumann map. Precisely, for Cauchy data $h_0 \in H^1(\RR^n) \oplus L^2(\RR^n)$, we denote by $u_{h_0}(t)$ the wave solution with initial data $h_0$. We then define the \emph{outside measurement operator} $\mathcal F: H_c^1(\Omega^c) \oplus L_c^2(\Omega^c) \to C^1(\RR_t; H^1(\Omega^c))$ as
\begin{equation}
\mathcal F\colon h_0 \to u_{h_0}(t)\big|_{\Omega^c}.
\end{equation}


\noindent We can now present a high-level version of the first main reconstruction theorem.

\begin{repmainthm}{t:bnc-to-Euclidean}
Let $y=(y^1,\dotsc,y^n)\in\Omega$ be a regular point, and let $(p,T)\in\bdy\Omega\times\RR_+$ be boundary normal coordinates for $y$. Then the Euclidean coordinates $y$ and the wave speed $c(y)$ may be reconstructed from $(p,T)$ and $\mathcal F$.
\end{repmainthm}

This reconstruction theorem carries one significant geometric restriction, the \emph{regular point} requirement on $y$. While regularity is defined rigorously in Section~\ref{s:bgnc}, the key obstruction occurs when the fastest path from $y$ to the boundary travels along an interface. Some quite reasonable choices of $(\Omega,c)$ feature an open set of such irregular points, on which Theorem~\ref{t:bnc-to-Euclidean} cannot immediately reconstruct $c$. Fortunately, a layer stripping-type argument allows us to recover such a $c$ in multiple steps, leading to an unconditional uniqueness result.

\begin{repmainthm}{t:complete-uniqueness}
	A piecewise smooth $c$ satisfying the (mild) conditions of Section~\ref{s:bgnc} is uniquely determined by $\mathcal F$.
\end{repmainthm}

The second property, multiple reflection removal, leads to a method for locating discontinuities in $c$ in (suitably generalized) boundary normal coordinates. Briefly, we may probe $\Omega$ with a wave packet and track the kinetic energy along the transmitted ray as time increases. At each discontinuity, energy is lost to the reflected wave; by measuring this loss we recover the reflection coefficient, and the time of the loss provides the depth of the discontinuity, in generalized boundary normal coordinates. Both calculated quantities (depth and reflection coefficient) become exact in the high-frequency limit.


\begin{repmainthm}{c:interface-location}
Let $\gamma(s)$ be a unit speed distance minimizing broken geodesic segment connecting a regular point $y\in\Omega$ to $\bdy\Omega$, with $\gamma(0)\in\bdy\Omega$, $\gamma(T)=y$.
Then the discontinuities of $c$ along $\gamma$, measured in boundary normal coordinates, may be reconstructed from $\mathcal F$.
\end{repmainthm}


We begin in Section~\ref{s:sc} with a re-introduction of scattering control and accompanying definitions. Section~\ref{s:harmonic} then presents the harmonic inner product-based reconstruction formula and uniqueness theorem. Sections~\ref{s:asymptotic} and~\ref{s:interface-recovery} conclude by presenting the wave packet approach to locating discontinuities in $c$.

\paragraph{Notation and Conventions}

We use $\eqml$ to indicate equality of distributions modulo smooth functions; throughout \emph{smooth} means $C^\infty$. We will extensively use Fourier integral operators associated with canonical graphs, abbreviating them as \emph{graph FIOs}. 

\section{Scattering control}								\label{s:sc}

This section revisits scattering control~\cite{SC}, a type of time-reversal iteration. Time reversal is a common theme in wave equation inverse problems, both in the mathematical literature and in practice (e.g.,~\cite{FMT}).
We present most of the key definitions and results that will be of use in the current paper.


\subsection{Domains and wave speeds}						\label{s:geometric-setup}

Let $c(x)$ be a piecewise smooth function on $\RR^n$, the wave speed, satisfying $c,c^{-1} \in L^\infty(\RR^n)$. We imagine $c$ to be known only outside a Lipschitz domain $\Omega\subset\clsr\Omega\subsetneq\RR^n$ representing the object of interest.

We allow ourselves to probe $\Omega$ with Cauchy data concentrated close to $\Omega$, in some Lipschitz domain $\Theta\supset\Omega$. We will add to this initial pulse a Cauchy data control supported outside $\Theta$, whose role is to isolate the resulting wave field at a particular time and depth controlled by a time parameter $T\in (0,\frac12\diam\Omega)$. This will require controls supported in an ambient Lipschitz neighborhood $\Upsilon$ of $\smash{\clsr\Theta}$ that satisfies $d(\bdy\Upsilon,\smash{\clsr\Theta})>2T$ and is otherwise arbitrary\footnote{Here the distance $d(x,y)$ is \emph{travel time distance}: the infimum of the lengths of all $AC$ curves $\gamma(s)$ connecting $x$ and $y$, measured in the metric $c^{-2}dx^2$, such that $\gamma^{-1}(\singsupp c)$ has measure zero; see~\eqref{e:dist-def}.}.

This initial pulse region $\Theta$ has a central role in the scattering series. First, define the \emph{depth} $d^*_\Theta(x)$ of a point $x$ inside $\Theta$:
\begin{equation}
	d^*_\Theta(x) = \when{+d(x,\bdy\Theta),		& x \in \Theta,\\
					  -d(x,\bdy\Theta),			& x \notin \Theta.}
													\label{e:exact-depth}
\end{equation}
Larger values of $d^*_\Theta$ are therefore deeper inside $\Theta$. For each $t$, define\footnote{We tacitly assume throughout that $\Theta_t$, $\Theta_t^\star$ are Lipschitz.} the open sets
\begin{nalign}
	\Theta_t^{\phantom\star} &= \set{x\in\Upsilon}{d^*_\Theta(x) > t},\\
	\Theta_t^\star &= \set{x\in\Upsilon}{d^*_\Theta(x) < t}.
													\label{e:def-Theta-t-star}
\end{nalign}
As in~\eqref{e:def-Theta-t-star} above, we use a superscript $\star$ to indicate sets and function spaces lying outside, rather than inside, some region. We define $\Omega_t$, $\Omega_t^\star$ similarly, and let $\Omega^\star=\Omega_0^\star$.

\subsection{Solution operators and spaces}					\label{s:wave-setup}

Let $\tilde{\mathbf C}$ %
%
%
 be the space of Cauchy data of interest:
\begin{align}
	\tilde{\mathbf C} = H_0^1(\Upsilon) \oplus L^2(\Upsilon),
\end{align}
considered as a Hilbert space with the \emph{energy inner product}
\begin{equation}
	\big\langle{(f_0,f_1),\,(g_0,g_1)\big\rangle} = \int_{\Upsilon} \left(\gradient f_0(x)\cdot\gradient \conj g_0(x) + c^{-2}f_1(x)\conj g_1(x)\right)\,dx.
\end{equation}
Within $\tilde{\mathbf C}$ define the subspaces of Cauchy data supported inside and outside $\Theta_{t}$:
\begin{nalign}
	\mathbf H_t &= H_0^1(\Theta_{t})\oplus L^2(\Theta_{t}),					& \hspace{1in} \mathbf H &= \mathbf H_0,\\
	\tilde{\mathbf H}_t^{\mathrlap\star} &= H_0^1(\Theta_{t}^\star) \oplus L^2(\Theta_{t}^\star),	& \tilde{\mathbf H}^{\star}\! &= \tilde{\mathbf H}_0^\star.
\end{nalign}
Define the energy and kinetic energy of Cauchy data $h=(h_0,h_1)\in\tilde{\mathbf C}$ in a subset $W\subseteq\RR^n$:
\begin{align}
	\En_W(h) &= \int_W \left(\dabs{\gradient h_0}^2 + c^{-2} \dabs{h_1}^2\right)\,dx,
	&
	\KE_W(h) &= \int_W c^{-2}\dabs{h_1}^2\,dx.
\end{align}
Next, define $F$ to be the solution operator for the initial value problem:
\begin{align}
	&F\colon H^1(\RR^n)\oplus L^2(\RR^n)\to C(\RR,H^1(\RR^n)),
	&
	&
	F(h_0,h_1) = u
	\text{\; s.t. }
	\begin{pdebracketed}
	(\partial_t^2-c^2\Delta)u &= 0,\\
				\drestr{u}_{t=0} &= h_0,\\
			\drestr{\d_t u}_{t=0} &= h_1.
	\end{pdebracketed}
	\label{e:wave-ivp}
\end{align}
Our data for the inverse problem is the \emph{outside measurement operator} $\mathcal F\colon H^1_0(\Omega^\star)\oplus L^2(\Omega^\star)\to C(\RR, H^1(\Omega^\star))$, the restriction of $F$ to $\Omega^\star$ in both domain and codomain.

Let $R_s$ propagate Cauchy data at time $t=0$ to Cauchy data at $t=s$:
\begin{align}
	R_s = \left(F,\d_t F\right)\!\Big|_{t=s}
	\mspace{-8mu}
	\colon H^1(\RR^n)\oplus L^2(\RR^n)\to H^1(\RR^n)\oplus L^2(\RR^n).
\end{align}
Now combine $R_s$ with a time-reversal operator $\nu\colon \tilde{\mathbf C}\to\tilde{\mathbf C}$, defining for a given $T$
\begin{align}
	R &= \nu\circ R_{2T},
	&
	\nu&\colon (f_0,f_1)\mapsto(f_0,-f_1).
\end{align}
In our problem, only waves interacting with $(\Omega,c)$ in time $2T$ are of interest. Consequently, let us ignore Cauchy data not interacting with $\Theta$, as follows.

Let $\mathbf G=\tilde{\mathbf H}^\star\cap\big( R_{2T}(H^1_0(\RR^n\setminus\clsr\Theta)\oplus L^2(\RR^n\setminus\clsr\Theta))\big)$ be the space of Cauchy data in $\tilde{\mathbf C}$ whose wave fields vanish on $\Theta$ at $t=0$ and $t=2T$. Let $\mathbf C$ be its orthogonal complement inside $\tilde{\mathbf C}$, and ${\mathbf H}_t^\star$ its orthogonal complement inside $\tilde{\mathbf H}_t^\star$. With this definition, $R_{2T}$ maps $\mathbf C$ to itself isometrically. Also, let $\pi_{\mathbf C}:\tilde{\mathbf C}\to\mathbf C$ be the corresponding orthogonal projection.

%
%

\subsection{Projections inside and outside $\Theta_t$}			\label{s:projections-setup}

The final ingredients needed are restriction operators for Cauchy data inside and outside each $\Theta_t$. As hard cutoffs are not bounded operators in energy space, we replace them with Hilbert space projections. 

Let $\pi_t$, $\pi_t^\star$ be the orthogonal projections of $\mathbf C$ onto $\mathbf H_t$, $\mathbf H_t^\star$ respectively; let $\clsr\pi_t=1-\pi_t$. For brevity, let $\clsr\pi=\clsr\pi_0$, $\pi^\star=\pi_0^\star$. The complementary projection $I-\pi_t-\pi^\star_t$ is the orthogonal projection onto $\mathbf I_t$, the orthogonal complement to $\mathbf H_t \oplus\mathbf H_t^\star$ in $\mathbf C$.

The Dirichlet principle provides an interpretation of these projections~\cite{SC}:
\begin{equation}
	(\clsr\pi_t h)(x) = \when{h(x), 		& x\in\Theta_t,\\
					(\phi,0),		& x\in\Theta_t^\star,}
\end{equation}
where $\phi$ is the harmonic extension of $h\restrictto{\bdy\Theta_t}$ to $\Upsilon$ (with zero trace on $\bdy\Upsilon$). Similarly, $\pi^\star_t h$ is zero on $\Theta_t$, and outside $\Theta_t$ is equal to $h$, with this harmonic extension subtracted from the first component.

\subsection{Scattering control}				\label{s:sc-def}

Our major tool is a Neumann series, the \emph{scattering control series}. Given Cauchy data $h_0\in\mathbf H$, define
\begin{equation}
	h_\infty = \sum_{i=0}^\infty (\pi^\star R)^{2i} h_0,
	\label{e:neumann}
\end{equation}
We will often need its $k\mith$ partial sum $h_k=\sum_{i=0}^k(\pi^\star R)^{2i}h_0$, as well. Formally, $h_\infty$ solves the \emph{scattering control equation}
\begin{equation}
	(I-\pi^\star R\pi^\star R)h_\infty = h_0.
	\label{e:maf}
\end{equation}
As~\eqref{e:neumann} expresses, $h_\infty$ consists of $h_0$, plus a control in $\mathbf H^\star$. In general, series~\eqref{e:neumann} does not converge in $\mathbf C$, although it does converge in an appropriate weighted space~\cite[Theorem 2.3]{SC}.

The behavior of the scattering control series is intertwined with a particular portion of the wave field, the harmonic almost direct transmission, which is at time $T$ and depth at least $T$.
\begin{definition}
	The \emph{harmonic almost direct transmission} of $h_0$ at time $T$ is
	\begin{equation}
		h\DT = h\DT(h_0,T)=\clsr\pi_T R_T h_0.
		\label{e:adt-def}
	\end{equation}
\end{definition}
Referring to the earlier discussion, we see $h\DT$ is equal to $R_Th_0$ inside $\Theta_T$; outside $\Theta_T$, its first component is extended harmonically from $\bdy\Theta_T$, while the second component is extended by zero.

We now excerpt the key theorems on the scattering control series' behavior from~\cite{SC}.

\begin{theorem}
	Let $h_0\in\mathbf H$ and $T\in(0,\frac12\diam\Theta)$. Then isolating the deepest part of the wave field of $h_0$ is equivalent to summing the scattering control series:
	\begin{nalign}
			(I-\pi^\star R\pi^\star R)h_\infty = h_0
		&\iff
			R_{-T} \clsr\pi R_{2T}h_\infty = h\DT
			\text{ and }
			h_\infty \in h_0+\mathbf H^\star.
		\label{e:basic-maf-behavior}
	\end{nalign}
	Such an $h_\infty$, if it exists, is unique in $\mathbf C$.
	\label{t:basic-maf}
\end{theorem}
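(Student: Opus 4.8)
The plan is to establish the two directions of the equivalence together with the uniqueness clause, leaning on three structural facts about $R=\nu\circ R_{2T}$. First, $R$ is an \emph{involution} on $\mathbf C$: since $\nu^2=I$ and $\nu R_s\nu=R_{-s}$ (time-reversal symmetry of the wave equation), one computes $R^2=\nu R_{2T}\nu R_{2T}=R_{-2T}R_{2T}=I$. Second, $R$ is an isometry of $\mathbf C$, because $R_{2T}$ is one by hypothesis and $\nu$ preserves the energy norm (it only flips the sign of the velocity component). Third, $\nu$ commutes with the projections $\pi$, $\pi^\star$, since these project onto $\nu$-invariant subspaces and $\nu$ is a self-adjoint unitary. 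These facts let me rewrite the nonlinearity: using $\pi^\star\nu=\nu\pi^\star$ and $\nu R_{2T}\nu=R_{-2T}$ gives the identity $R\pi^\star R=R_{-2T}\pi^\star R_{2T}$, hence $\pi^\star R\pi^\star R=\pi^\star R_{-2T}\pi^\star R_{2T}$, so the scattering control equation becomes $(I-\pi^\star R_{-2T}\pi^\star R_{2T})h_\infty=h_0$.

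From this form the \emph{control clause} $h_\infty\in h_0+\mathbf H^\star$ is immediate in the forward direction: the equation gives $h_\infty-h_0=\pi^\star R_{-2T}\pi^\star R_{2T}h_\infty$, whose right-hand side lies in $\operatorname{ran}\pi^\star=\mathbf H^\star$. Conversely, once $h_\infty\in h_0+\mathbf H^\star$ is assumed, the equation is \emph{equivalent} to the fixed-point identity $h_\infty-h_0=\pi^\star R_{-2T}\pi^\star R_{2T}h_\infty$, and the remaining task is to match this with the \emph{deep-part clause} $R_{-T}\clsr\pi R_{2T}h_\infty=h\DT$. I would do so by analyzing the field $b=R_{2T}h_\infty$ at time $2T$ through the decomposition $I=\pi+\pi^\star+\pi_{\mathbf I}$ with $\pi_{\mathbf I}=I-\pi-\pi^\star$, so that $\clsr\pi b=\pi^\star b+\pi_{\mathbf I}b$ is exactly the portion of $b$ invisible to $\pi^\star b$. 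The content is then a finite-speed-of-propagation (Huygens) statement: since $h_0\in\mathbf H$ is supported at depth $\ge 0$ and $T<\tfrac12\diam\Theta$, the part of $b$ at depth $\ge T$ back-propagates cleanly under $R_{-T}$ to the direct transmission $\clsr\pi_T R_Th_0=h\DT$, \emph{provided} the control $h_\infty-h_0\in\mathbf H^\star$ has cancelled every multiply-reflected contribution that would otherwise pollute the deep region; the fixed-point identity is precisely the bookkeeping enforcing this cancellation, which is what makes the correspondence run in both directions.

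I expect this last identification to be the main obstacle: translating the operator identity into the geometric statement about reflections requires tracking how $\pi^\star$ at time $0$ and at time $2T$ interact across the back-propagation $R_{-2T}$, and keeping the harmonic (Dirichlet) extensions hidden inside $\clsr\pi$ and $\clsr\pi_T$ consistent at the two depths. A further subtlety is that the Neumann series $\sum_i(\pi^\star R)^{2i}h_0$ need not converge in $\mathbf C$, so I would argue throughout with the fixed-point equation and its putative solution directly, never with the partial sums, invoking convergence only in the weighted space of \cite[Theorem~2.3]{SC} should an approximation step be needed.

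Finally, uniqueness in $\mathbf C$ follows cleanly from the isometry and the defining property $\mathbf C=\mathbf G^{\perp}$, and needs none of the delicate analysis above. If $h_\infty,h_\infty'$ both solve the equation, then $d=h_\infty-h_\infty'$ satisfies $d=\pi^\star R\pi^\star R\,d$, so $d\in\mathbf H^\star$; setting $e=\pi^\star R\,d$ gives $d=\pi^\star R\,e$, whence
\[
	\norm{d}=\norm{\pi^\star R\,e}\le\norm{R\,e}=\norm{e}=\norm{\pi^\star R\,d}\le\norm{R\,d}=\norm{d}.
\]
Equality throughout forces $R\,d=e\in\mathbf H^\star$ and $R\,e=d\in\mathbf H^\star$. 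Thus the wave field of $d$ vanishes on $\Theta$ at $t=0$ (as $d\in\mathbf H^\star$), and $R_{2T}d=\nu e$ is supported outside $\Theta$, so it also vanishes on $\Theta$ at $t=2T$; by the characterization of $\mathbf G$ this places $d\in\mathbf G=\mathbf C^{\perp}$, forcing $d=0$ and hence $h_\infty=h_\infty'$.
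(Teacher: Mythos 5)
Your structural preliminaries are mostly sound: the involution $R^2=I$, the isometry, and the forward derivation of the control clause are correct (one caveat: commuting $\nu$ past $\pi^\star$ on $\mathbf C$ requires $\nu\mathbf G=\mathbf G$, i.e.\ stability of $\mathbf G$ under time reversal, which exchanges the roles of $t=2T$ and $t=-2T$ in its definition; you assert this but never check it). Your uniqueness argument is genuinely complete and attractive: the equality-forcing chain $\norm{d}=\norm{\pi^\star Re}\le\norm{e}=\norm{\pi^\star Rd}\le\norm{d}$ puts $d$ and $R_{2T}d$ outside $\Theta$, hence $d\in\mathbf G\cap\mathbf C=\{0\}$. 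But the heart of the theorem is the biconditional between the operator equation and $R_{-T}\clsr\pi R_{2T}h_\infty=h\DT$, and there you have only restated what must be shown. After your reductions the remaining task is: given $h_\infty\in h_0+\mathbf H^\star$ and $b=R_{2T}h_\infty$, prove $\pi^\star R_{-2T}(I-\pi^\star)b=0$ if and only if $\clsr\pi\, b=R_T h\DT$. Your text says finite speed of propagation plus the fixed-point ``bookkeeping'' should enforce the cancellation, and you yourself flag this identification as the expected main obstacle; that is precisely the gap, and it is the entire content of the theorem.

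The missing idea is that Huygens-type arguments cannot close it. Finite speed controls only the deep region: since data in $\mathbf H^\star$ cannot reach depth $>T$ in time $T$, one gets $\clsr\pi_T R_T h_\infty=h\DT$ for free, for \emph{every} $h_\infty\in h_0+\mathbf H^\star$ --- note this is not the asserted identity, which constrains the field at time $2T$ at \emph{all} depths inside $\Theta$, down to depth $0$. On the shallow band (depths between $0$ and $T$), equivalently for the harmonic-extension portions hidden inside $\clsr\pi$ and $\clsr\pi_T$, the needed statement is of unique continuation type: if $(I-\pi^\star)$ annihilates the Cauchy data at two times $2T$ apart --- i.e.\ the field is stationary and harmonic outside $\Theta$ at $t=0$ and $t=2T$ --- then $\d_t^2$ of the field vanishes at depth $<T$ at the intermediate time. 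This is Tataru-type unique continuation, exactly the ingredient \cite[Lemma 2.9]{SC} that the present paper itself invokes in proving Lemma~\ref{l:redatum}; indeed the paper does not reprove Theorem~\ref{t:basic-maf} at all but excerpts it from \cite{SC}, whose argument runs through the Dirichlet-principle description of the projections together with that lemma. Without a unique-continuation input, your scheme cannot exclude non-harmonic energy lingering at shallow depths inside $\Theta$ at time $2T$ whose back-propagation lands in $\mathbf H^\star$, so neither direction of the equivalence goes through; you should also then drop the stated plan of avoiding the weighted-space machinery, since the fixed-point equation alone does not encode the geometry you need.
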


\begin{theorem}
	With $h_0, T$ as in Theorem~\ref{t:basic-maf}, define the partial sums
	\begin{equation}
		h_k = \sum_{i=0}^k (\pi^\star R\pi^\star R)^i h_0.
		\label{e:partial-sums}
	\end{equation}
	Then the deepest part of the wave field can be (indirectly) recovered from $\{h_k\}$ regardless of convergence of the scattering control series:
	\begin{align}
		\lim_{k\to\infty} R_{-T}\clsr\pi R_{2T}h_k &=  R_T\chi h_0 = h\DT,
		&
		\norm{\clsr\pi Rh_k}&\searrow\norm{h\DT}.
		\label{e:maf-interior-limit}
	\end{align}
	The set of $h_0$ for which the scattering control series converges in $\mathbf C$ is dense in $\mathbf H$.
	\label{t:limit-maf}
\end{theorem}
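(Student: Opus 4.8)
The plan is to exploit the fact that $R=\nu R_{2T}$ is a self-adjoint unitary involution on $\mathbf C$. This follows from energy conservation (so $R_{2T}$ is unitary on $\mathbf C$) together with the time-reversal symmetry $\nu R_s\nu=R_{-s}$, which give $R^*=R$ and $R^2=I$. Consequently $A:=\pi^\star R\pi^\star R=\pi^\star\,(R\pi^\star R)=P\tilde P$ is a product of two orthogonal projections, $P=\pi^\star$ onto $\mathbf H^\star$ and $\tilde P=R\pi^\star R$ onto $R\mathbf H^\star$. Moreover, since $\clsr\pi=I-\pi$ with $R$ unitary, $\|\clsr\pi Rh\|=\operatorname{dist}(Rh,\mathbf H)=\operatorname{dist}(h,R\mathbf H)$, which measures how far $h$ is from having its time-reversed field supported inside $\Theta$. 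Because $\nu$ commutes with $\clsr\pi$ and $R_{\pm T}$ are energy isometries, $\|R_{-T}\clsr\pi R_{2T}h_k\|=\|\clsr\pi Rh_k\|$, so the monotone-norm claim in \eqref{e:maf-interior-limit} is the norm counterpart of the interior convergence. Finally, the increments $\delta_k:=h_{k+1}-h_k=A^{k+1}h_0$ lie in $\mathbf H^\star$, so every partial sum stays in the admissible affine set $h_0+\mathbf H^\star$, matching Theorem~\ref{t:basic-maf}.

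First I would prove the monotone decrease $\|\clsr\pi Rh_k\|\searrow$. Writing $\clsr\pi Rh_{k+1}=\clsr\pi Rh_k+\clsr\pi R\delta_k$, the target is a Pythagorean identity of the form $\|\clsr\pi Rh_{k+1}\|^2=\|\clsr\pi Rh_k\|^2-\|(\text{non-interior image of the new control})\|^2$, i.e.\ the new control's contribution is orthogonal to the retained residual; one checks this using $R^2=I$, $R^*=R$, $\delta_k\in\mathbf H^\star$, and the orthogonality of $\pi$ and $\pi^\star$. This is the technical heart and the step most in need of care. Granting it, $\|\clsr\pi Rh_k\|$ is nonincreasing, hence convergent to some $L\ge0$. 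For the floor, I would show $\|\clsr\pi Rh\|\ge\|h\DT\|$ for every $h\in h_0+\mathbf H^\star$: by finite speed of propagation a control supported outside $\Theta$ cannot reach the part of the field at depth $\ge T$ at time $T$, so that part survives in $\clsr\pi Rh$ regardless of the control, and Theorem~\ref{t:basic-maf} identifies this invariant floor as $h\DT$. Hence $L\ge\|h\DT\|$.

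Next I would show $L=\|h\DT\|$ together with the strong interior limit $R_{-T}\clsr\pi R_{2T}h_k\to h\DT$. The point is that the excess of $\clsr\pi Rh_k$ over $h\DT$ is removable and the iteration exhausts it: von Neumann's alternating-projection theorem gives $A^ih_0=(P\tilde P)^ih_0\to P_{\mathcal M}h_0$ strongly, where $\mathcal M=\mathbf H^\star\cap R\mathbf H^\star=\ker(I-A)$; since $h_0\in\mathbf H$ is orthogonal to $\mathbf H^\star\supseteq\mathcal M$ we get $P_{\mathcal M}h_0=0$, so $\delta_k\to0$. Commuting $R_{-T}\clsr\pi R_{2T}$ past the depth by finite speed of propagation (so that it extracts precisely the depth-$\ge T$, time-$T$ part $\clsr\pi_T R_Th_0$), I would pass to a weak limit of $R_{-T}\clsr\pi R_{2T}h_k$, identify it with $h\DT=R_T\chi h_0$ using Theorem~\ref{t:basic-maf}, and upgrade weak to strong convergence via $\|\cdot\|\to L=\|h\DT\|$. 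This simultaneously yields the first identity in \eqref{e:maf-interior-limit} and pins the monotone limit at $\|h\DT\|$.

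For the density statement I would characterize the convergence set. For $h_0\in\mathbf H$ one has $h_0\perp\mathcal M$, and the telescoping $h_k=(I-A^{k+1})f$ for $h_0=(I-A)f$ with $f\perp\mathcal M$, combined with $A^{k+1}f\to P_{\mathcal M}f=0$, shows the series converges in $\mathbf C$ if and only if $h_0\in\operatorname{Ran}(I-A)$. Density then reduces to showing $\operatorname{Ran}(I-A)\cap\mathbf H$ is dense in $\mathbf H$. I expect this density-within-$\mathbf H$ step, and the Pythagorean monotonicity identity above, to be the two main obstacles. The whole series fails to converge precisely because the subspaces $\mathbf H^\star$ and $R\mathbf H^\star$ are tangent, placing $1$ in the \emph{continuous} spectrum of $A$; convergence must therefore be harvested on a spectrally thin but dense set. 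The obstruction is spectral mass of $h_0$ near the fixed space $\mathcal M$, and since $\mathbf H\cap\mathcal M=0$ with $1$ not an eigenvalue on the relevant cyclic subspace, truncating the spectral measure of the self-adjoint model $\tilde P P\tilde P$ away from $1$ produces approximants in the convergence set; the weighted-resolvent convergence of \cite[Theorem 2.3]{SC} is what I would use to upgrade these truncations to norm density in $\mathbf H$.
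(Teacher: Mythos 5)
A preliminary caveat about the comparison: this paper never actually proves Theorem~\ref{t:limit-maf} --- it is excerpted verbatim from \cite{SC} (``We now excerpt the key theorems on the scattering control series' behavior from~\cite{SC}''), so your attempt can only be measured against the argument in that reference, whose functional-analytic skeleton your outline does echo. Your structural observations are correct: $R=\nu R_{2T}$ is a self-adjoint unitary involution (energy conservation plus $\nu R_s\nu=R_{-s}$), $A=\pi^\star R\pi^\star R$ is a product of the orthogonal projections $\pi^\star$ and $R\pi^\star R$, and since $\mathbf H\perp\mathbf H^\star\supseteq\mathcal M$, von Neumann's alternating projection theorem does give $\delta_k=A^{k+1}h_0\to0$ strongly. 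Your finite-speed ``floor'' can even be made exact rather than an inequality: for every $h\in h_0+\mathbf H^\star$ one has $\clsr\pi_T R_T h=h\DT$ on the nose, and since $(1-\clsr\pi)R_{2T}h_k$ vanishes on $\Theta$ (Dirichlet-principle description of $\clsr\pi$), finite speed gives $\clsr\pi_T R_{-T}\clsr\pi R_{2T}h_k=\clsr\pi_T R_Th_k=h\DT$; hence $R_{-T}\clsr\pi R_{2T}h_k=h\DT+r_k$ with $\clsr\pi_T r_k=0$, so $r_k\perp h\DT$ and $\norm{\clsr\pi R_{2T}h_k}^2=\norm{h\DT}^2+\norm{r_k}^2$. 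The entire theorem thereby reduces to showing $\norm{r_k}\searrow0$.

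That reduction exposes the genuine gaps. First, you propose to identify the interior limit ``using Theorem~\ref{t:basic-maf}'', but that theorem characterizes \emph{exact} solutions $h_\infty$, whose existence in $\mathbf C$ is precisely what may fail here --- the whole point of Theorem~\ref{t:limit-maf}. Nothing in your outline converts the approximate-solution statement $(I-A)h_k=h_0-A^{k+1}h_0\to h_0$ into $r_k\to0$: the projection $\clsr\pi$ (time $2T$, depth $0$) is \emph{not} one of the two projections $\pi^\star$, $R\pi^\star R$ in the alternating scheme, so the two-subspace algebra and $\delta_k\to0$ alone do not control $r_k$. This continuity step --- that the residual of the scattering control equation controls the interior field discrepancy --- is the actual content of the theorem and is where \cite{SC} does its work. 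The same applies to your ``Pythagorean'' monotonicity: with the decomposition above it becomes $\norm{r_k}\searrow$, which you conjecture but never derive, and which does not follow formally from the trivial decrease of $\norm{A^kh_0}$. Second, the density step: your criterion that the series converges in $\mathbf C$ iff $h_0\in\operatorname{Ran}(I-A)$ (for $h_0\perp\mathcal M$, automatic on $\mathbf H$) is correct, but your proposed fix fails as described --- truncating the spectral measure of $\tilde PP\tilde P$ away from $1$ produces approximants lying in spectral subspaces adapted to the pair $(\mathbf H^\star, R\mathbf H^\star)$, and there is no reason these lie in $\mathbf H$; keeping the approximants inside $\mathbf H$ is the entire difficulty, and the closing gesture at \cite[Theorem 2.3]{SC} names no mechanism for it. As written, then, the proposal shares the framework of \cite{SC} but leaves all three load-bearing steps --- monotonicity, limit identification, and density-within-$\mathbf H$ --- open.
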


Theorem~\ref{t:basic-maf} covers the situation when the scattering control series converges: the wavefield of $h_\infty$ inside $\Theta$ at $t=2T$ is equal to that generated by $h\DT$, the deepest portion of $h_0$'s wavefield, alone. This is not true of the wave field of $h_0$ itself, because other waves, including multiple reflections, will mix with $h\DT$'s wave field in general.

Theorem~\ref{t:limit-maf} describes the general case: convergence may fail, but only outside $\Theta$. Inside $\Theta$, the partial sums' wave fields at $t=2T$ do converge to $R_Th\DT$, and their energies are in fact monotonically decreasing.

By combining the theorems above with energy conservation, we may recover the energy of the harmonic almost direct transmission, as well as its kinetic component (which does not include a harmonic extension). For precise statements, see~\cite[Props.\ 2.7, 2.8]{SC}.

\section{Uniqueness and reconstruction of $c$ by harmonic inner products}   \label{s:harmonic}

In this section, we demonstrate how to recover $c$ by expressing it in terms of particular inner products between wave fields and harmonic functions --- inner products that can be computed by scattering control. The idea originates with Belishev and Blagovestchenskii~\cite{Belishev97} in the context of boundary control, and a similar idea was recently taken up by de Hoop, Kepley, and Oksanen and realized computationally~\cite{DKO2}. Here, the use of Cauchy data considerably simplifies the reconstruction formulas. We will restrict ourselves to piecewise smooth $c$, in order to analyze the behavior of wave fields near their wave fronts with microlocal machinery, but we expect the method is applicable to any $c$ satisfying unique continuation. 

We begin by introducing \emph{broken geodesic normal coordinates}, the natural analogue of geodesic normal coordinates for piecewise smooth metrics, in Section~\ref{s:bgnc}. Section~\ref{s:c-recovery} follows with the main theorem on recovering wave speeds with harmonic inner products. Due to the possibility of coordinate breakdown, we may not be able to recover $c$ everywhere in one pass, but prove in Section~\ref{s:layer-stripping} with a layer stripping-type argument that $c$ can be recovered on all of $\Omega$ nonetheless.

\subsection{Broken geodesic normal coordinates}     \label{s:bgnc}

Assume $\Omega\subset\RR^n$ is an open domain whose closure is an embedded submanifold with boundary in $\RR^n$. Let $c(x)$ be a piecewise smooth and lower semicontinuous function on $\RR^n$, bounded above and away from zero, and singular only on a set of disjoint, closed, connected, smooth hypersurfaces $\Gamma_i$ of $\clsr\Omega$, called \emph{interfaces}. Let $\Gamma=\bigcup\Gamma_i$; let $\{\Omega_j\}$ be the connected components of $\RR^n\setminus\Gamma$. Assume each smooth piece $\restr{c}_{\Omega_j}$ extends to a smooth function $c_j$ on $\RR^n$. 

The distance $d(X,Y)$ between sets $X,Y\subset\clsr\Omega$ is the infimal length of absolutely continuous paths $\gamma$ between points in $X$ and $Y$: 
\begin{equation}
	d(X,Y) = \inf\set*{\int c(\gamma(s))^{-2} \abs{\gamma'(s)}\,ds}%
				{\gamma\in\AC(\clsr\Omega),\, \gamma(0)\in X,\, \gamma(1)\in Y}.
	\label{e:dist-def}
\end{equation}
%
The Arzel\`a-Ascoli theorem implies that the infimum in~\eqref{e:dist-def} is always attained for closed, nonempty $X,Y$.
Under some regularity conditions, we can now identify an interior point $x$ with the closest boundary point $p(x)$ and the distance $T(x)$ between them.

\begin{definition}
The curve $\gamma\in\AC(\clsr\Omega)$ is \emph{demi-tangent} to $\Gamma$ at $\gamma(s)$ if at least one of the one-sided derivatives of $\gamma$ exists at $s$ and belongs to $T\Gamma$.

We call $x\in\Omega\setminus\Gamma$ \emph{almost regular} with respect to $(\Omega,c)$ if the infimum in $d(x,\bdy\Omega) = d(\{x\},\bdy\Omega)$ is achieved by a unique path $\gamma_x$, and this path is nowhere demi-tangent to $\Gamma\cup\bdy\Omega$. 

Let $p(x)=\gamma_x(1)$ be the closest boundary point to $x$, and $T(x)=d(x,\bdy\Omega)$. The pair $(p(x),T(x))$ are the \emph{broken geodesic normal coordinates} for $x$.
\end{definition}

The following lemma explains the name ``broken geodesic normal coordinates'' for $(p(x),T(x))$.

\begin{lemma}
	For every almost regular $x$, the minimal path $\gamma_x$ is a purely transmitted (broken) geodesic intersecting $\bdy\Omega$ normally. 
	\label{l:minimal-is-normal}
\end{lemma}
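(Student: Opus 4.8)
The plan is to show that a distance-minimizing curve $\gamma_x$ for an almost regular point $x$ must be a broken geodesic (a concatenation of smooth geodesic segments within each region $\Omega_j$) that crosses each interface by pure transmission, and that it meets $\bdy\Omega$ orthogonally. The overall strategy is a standard first-variation / calculus-of-variations argument applied piecewise, with the almost-regularity hypotheses (uniqueness of the minimizer and the no-demi-tangency condition) used precisely to rule out the degenerate behaviors that would otherwise spoil the conclusion.

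First I would establish that $\gamma_x$ is a smooth geodesic (for the metric $c^{-2}\,dx^2$) on each open subarc lying strictly inside a single component $\Omega_j$. This is the classical fact that length minimizers in a smooth Riemannian metric satisfy the geodesic equation; since $c$ is smooth on $\Omega_j$ and bounded above and away from zero, the restriction of $\gamma_x$ to any such subarc is a minimizer between its endpoints and hence a geodesic. Next I would analyze the behavior at each interface crossing. Because $\gamma_x$ is nowhere demi-tangent to $\Gamma$, at each point where $\gamma_x$ meets an interface $\Gamma_i$ the one-sided velocities are genuinely transversal to $\Gamma_i$; in particular the curve actually crosses $\Gamma_i$ rather than running along it or grazing it tangentially. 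A first-variation computation at the crossing point, varying the interface-crossing location along $\Gamma_i$, yields a Snell-type transmission condition relating the incoming and outgoing tangential momenta, which is exactly the statement that $\gamma_x$ is a \emph{transmitted} broken geodesic. The no-demi-tangency hypothesis guarantees the variation is nondegenerate so that the Euler--Lagrange condition at the interface is meaningful and forces transmission rather than a reflected or tangentially-trapped segment.

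Then I would treat the boundary endpoint $p(x)=\gamma_x(1)\in\bdy\Omega$. Since $T(x)=d(x,\bdy\Omega)$ is the distance to the \emph{entire} boundary and the minimizer is free to choose its endpoint on $\bdy\Omega$, a free-endpoint first variation shows the terminal velocity must be $c^{-2}$-orthogonal to $T\bdy\Omega$ at $p(x)$; i.e., $\gamma_x$ hits $\bdy\Omega$ normally. Again the no-demi-tangency condition at $\bdy\Omega$ ensures the endpoint contact is transversal, so the variation is admissible and the orthogonality conclusion is valid. Finally I would note that uniqueness of the minimizing path $\gamma_x$ (part of almost regularity) is what makes $(p(x),T(x))$ well-defined coordinates, and that the finiteness of the number of interface crossings can be argued from the fact that $c$ is singular only on finitely many disjoint closed connected hypersurfaces together with the transversality at each crossing (so crossings cannot accumulate without contradicting minimality or the no-demi-tangency condition).

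The main obstacle I expect is handling the interface crossings rigorously: one must justify that the minimizer meets $\Gamma$ in a well-controlled way — crossing each interface transversally at isolated points, with no tangential sliding along an interface and no accumulation of crossings — before the clean first-variation transmission condition can even be written down. This is exactly where the demi-tangency definition does its work, and the delicate point is converting the pointwise ``one-sided derivative not in $T\Gamma$'' hypothesis into the global structural statement that $\gamma_x$ decomposes into finitely many transversal transmitted geodesic segments. A secondary technical subtlety is that the natural competitor paths used in the variation must remain admissible in the sense of~\eqref{e:dist-def} (absolutely continuous, with $\gamma^{-1}(\singsupp c)$ of measure zero), so the variations must be constructed to preserve transversality to $\Gamma$; the transversality supplied by almost-regularity makes such admissible variations available.
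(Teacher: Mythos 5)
Your overall strategy (piecewise geodesics inside each $\Omega_j$, a first variation at interface crossings, a free-endpoint variation at $\partial\Omega$) is the same as the paper's, but there is a genuine gap, and it sits exactly at the step the lemma is really about: ruling out \emph{reflections}. You claim that the first-variation (Euler--Lagrange) condition obtained by sliding the crossing point along $\Gamma_i$, together with the no-demi-tangency hypothesis, ``forces transmission rather than a reflected or tangentially-trapped segment.'' That is not correct. Stationarity of length under variations of the contact point along the interface yields only Snell's condition that $\gamma'(t_i^-)-\gamma'(t_i^+)$ is normal to $\Gamma$, and a transversal \emph{reflection} (angle of incidence equal to angle of reflection) satisfies this condition just as well as a transmission --- it is precisely the classical critical-point characterization of the law of reflection. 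Nor does almost regularity help here: a transversally reflected path has both one-sided velocities transversal to $\Gamma$, so it is nowhere demi-tangent, and the no-demi-tangency hypothesis does not exclude it. What does exclude it --- and what the paper's proof uses --- is that after a reflection the two adjacent segments lie in the \emph{same} smooth piece $\overline{\Omega_{j}}$, so their concatenation is a curve with a corner in a single smooth Riemannian metric, and such a corner can be ``rounded'': replacing it by a minimizing geodesic between $\gamma(t_i-\epsilon)$ and $\gamma(t_i+\epsilon)$ strictly shortens the path while, for small $\epsilon$ and by transversality, remaining on the same side of $\Gamma_i$ and hence admissible. This contradicts minimality of $\gamma_x$ and leaves only refractions (the paper cites \cite[Theorem 6.6]{LeeRM} for the corner-rounding). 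Without this step your argument proves only that $\gamma_x$ is a broken geodesic satisfying Snell's condition, not a \emph{purely transmitted} one, which is the substantive content of the lemma.

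The remainder of your outline is sound and matches the paper: interior subarcs are geodesics because they minimize between their endpoints; Snell's condition at interfaces and normality at $\partial\Omega$ both follow from the first variation formula (the paper cites \cite[Proposition 6.5]{LeeRM}); and your attention to the structural issues --- that crossings occur at isolated points, do not accumulate, and involve no tangential sliding --- is appropriate. Indeed, on the finiteness of $\gamma_x^{-1}(\Gamma)$ you are if anything more careful than the paper, whose proof simply writes down a finite partition $0=a_0<a_1<\dotsb<a_r=1$ with $\{a_1,\dotsc,a_{r-1}\}=\gamma_x^{-1}(\Gamma)$ without further comment. The one item to repair is the reflection step above.
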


The proofs of this lemma and the others in this section are deferred to Section~\ref{s:bgnc-lemmas}.
For completeness, we recall the definition of broken geodesics.

\begin{definition}
	A \emph{(unit-speed) broken geodesic} in $(\Omega,c)$ is a continuous, piecewise smooth path $\gamma\colon \RR\supset I\to M$ that is a unit-speed geodesic with respect to $g=c^{-2}dx^2$ on $\Omega\setminus\Gamma$, intersects the interfaces $\Gamma$ at a discrete set of points $t_i\in I$. Furthermore, at each $t_i$ the intersection is transversal and Snell's Law is satisfied: that is, $\gamma'(t_i^-)-\gamma'(t_i^+)$ is normal to $\Gamma$. We will usually drop ``unit speed'' for brevity.
	
	A \emph{transmitted (broken) geodesic} in a unit-speed broken geodesic experiencing only refractions; that is, the inner products of $\gamma'(t_i^-)$ and $\gamma'(t_i^+)$ with the normal to $\Gamma$ have identical signs at each $t_i$.
\end{definition}

For every $(x,v)\in S\clsr\Omega$ there is a maximal transmitted broken geodesic $\gamma_{x,v}$ with $\gamma_{x,v}'(0)=(x,v)$. Hence the broken exponential map
\begin{equation}
\exp_{\bdy\Omega}(p,T) = \gamma_{p,\nu(p)}(T)
\end{equation}
is a left inverse for $x\mapsto(p(x),T(x))$; here $\nu(p)$ is the inward unit normal to $\bdy\Omega$ at $p$.

In the case of smooth $c$, boundary normal coordinates parametrize $\Omega$ on the complement of its cut locus. A similar result is true for broken geodesic normal coordinates:

\begin{definition}
Let $x\in\Omega$ be almost regular. Then $x$ is \emph{regular} if $d\exp_{\bdy\Omega}$ is bijective at $(p(x),T(x))$; otherwise, it is a \emph{focal point}.
Let $\Omega_r$ be the set of regular $x$.
\end{definition}


\begin{lemma}
	$\Omega_r$ is open; the broken geodesic normal coordinate map $x\mapsto (p(x),T(x))$ is a diffeomorphism between $\Omega_r$ and its image.
	\label{l:bgnc-smooth}
\end{lemma}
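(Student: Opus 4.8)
The plan is to realize $\Phi\colon x\mapsto(p(x),T(x))$ as a local inverse of the broken exponential map $\exp_{\bdy\Omega}$, and to transfer the smoothness and invertibility of $\exp_{\bdy\Omega}$ (guaranteed at regular points by the inverse function theorem) back to $\Phi$. The global injectivity of $\Phi$ is immediate: since $\exp_{\bdy\Omega}$ is a single-valued left inverse of $\Phi$, the identity $\Phi(x_1)=\Phi(x_2)$ forces $x_1=\exp_{\bdy\Omega}(\Phi(x_1))=\exp_{\bdy\Omega}(\Phi(x_2))=x_2$. So the substance of the lemma lies in the openness of $\Omega_r$ together with the local smoothness of $\Phi$ and of its inverse.

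First I would verify that $\exp_{\bdy\Omega}$ is smooth near $(p(x),T(x))$ for regular $x$. By Lemma~\ref{l:minimal-is-normal} the minimizing path $\gamma_x$ is a transmitted broken geodesic meeting $\bdy\Omega$ normally, and almost-regularity forbids demi-tangency, so every interface crossing (and the boundary crossing) is transversal. At a transversal crossing Snell's law determines the refracted direction as a smooth function of the incoming ray, by the implicit function theorem; composing the finitely many smooth refraction maps with the smooth geodesic flow in each $\Omega_j$ shows that $\exp_{\bdy\Omega}$ depends smoothly on $(p,T)$ near $(p(x),T(x))$. Since $x$ is regular, $d\exp_{\bdy\Omega}$ is bijective there, and the inverse function theorem yields a neighborhood $U$ of $(p(x),T(x))$ mapped diffeomorphically onto a neighborhood $V$ of $x$; write $\Psi=(\exp_{\bdy\Omega}|_U)^{-1}$. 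Shrinking $U$, I may assume $d\exp_{\bdy\Omega}$ is bijective throughout $U$, since bijectivity of the differential is an open condition.

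The crux is to show that for $x'\in V$ the near-geodesic produced by $\Psi$ is in fact the unique distance minimizer, so that $x'\in\Omega_r$ and $\Phi(x')=\Psi(x')$. I would argue by contradiction: if $\Omega_r$ failed to be open at $x$, there would be $x_k\to x$ with $x_k\notin\Omega_r$. For each $k$ pick a minimizer $\sigma_k$ of $d(x_k,\bdy\Omega)$, which exists by the Arzel\`a--Ascoli remark following~\eqref{e:dist-def}; since $d(x_k,\bdy\Omega)\to d(x,\bdy\Omega)$ the lengths are bounded, so a subsequence converges uniformly to a minimizer of $d(x,\bdy\Omega)$, necessarily $\gamma_x$ by uniqueness. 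As $\gamma_x$ crosses all interfaces and the boundary transversally, the $\sigma_k$ do so for large $k$; first-order optimality then forces each $\sigma_k$ to be a transmitted broken geodesic normal to $\bdy\Omega$, that is, $\sigma_k=\gamma_{q_k,\nu(q_k)}$ with $(q_k,L_k)\to(p(x),T(x))$, eventually lying in $U$. Injectivity of $\exp_{\bdy\Omega}|_U$ then identifies $(q_k,L_k)=\Psi(x_k)$, so the minimizer is unique and coincides with the near-geodesic; hence $x_k$ is almost regular with $\Phi(x_k)=\Psi(x_k)\in U$, and since $d\exp_{\bdy\Omega}$ is bijective on $U$, $x_k$ is regular --- contradicting $x_k\notin\Omega_r$. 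This proves $\Omega_r$ open and simultaneously gives $\Phi=\Psi$ on $V$, so that $\Phi$ is smooth with smooth inverse $\exp_{\bdy\Omega}|_U$ near each regular $x$; combined with the global injectivity above, $\Phi$ is a diffeomorphism onto its image.

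I expect the main obstacle to be the convergence-of-minimizers step: one must rule out minimizers of nearby points ``escaping'' to a different part of $\Omega$, showing instead that they converge to $\gamma_x$ and eventually inherit its transversal crossing pattern, so that the local injectivity of $\exp_{\bdy\Omega}$ can be brought to bear. The delicate points are the lower semicontinuity of length under uniform convergence, which makes limits of minimizers again minimizers, and the promotion of a ``transversal limit'' to ``transversal for large $k$'' at each of the finitely many crossings, which relies on transversality being an open condition and on the crossing points and refracted directions of $\sigma_k$ tracking those of $\gamma_x$.
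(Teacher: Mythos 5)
Your proposal follows essentially the same route as the paper's proof: the diffeomorphism statement is reduced to openness of $\Omega_r$ via the observation that $\exp_{\bdy\Omega}$ is an injective local diffeomorphism over $\Omega_r$, and openness is attacked by choosing minimizers for a sequence of irregular points, extracting a uniform limit by Arzel\`a--Ascoli, and identifying that limit with $\gamma_x$ by uniqueness. The paper argues contrapositively (a limit of irregular points is irregular, in four cases: $x_j\in\Gamma$, non-unique minimizers, demi-tangent minimizers, singular $d\exp_{\bdy\Omega}$), whereas you argue by contradiction around a regular $x$; these are the same argument. Your use of the local inverse $\Psi$ on $U$ is a tidy consolidation of two of the paper's cases: injectivity of $\exp_{\bdy\Omega}|_U$ simultaneously rules out non-uniqueness of minimizers and singularity of the differential, where the paper instead deduces singularity of $d\exp_{\bdy\Omega}$ at the limit from two distinct preimage sequences merging.

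The one step you assert rather than prove --- ``as $\gamma_x$ crosses transversally, the $\sigma_k$ do so for large $k$'' --- is exactly where the paper does its heaviest work, and as written it does not follow. Uniform convergence $\sigma_k\to\gamma_x$ gives no control of derivatives, and openness of transversality protects only against $C^1$-perturbations; a priori some $\sigma_k$ could be demi-tangent to $\Gamma$ (this is precisely one of the ways $x_k$ can fail regularity, so it cannot be assumed away). The paper's corresponding case proceeds by taking the first tangency time $s_k^*$ of each offending minimizer, using the geodesic equation to get uniform $C^2$ bounds on the transmitted-geodesic segment preceding $s_k^*$, and then a Taylor expansion at $s^*=\lim s_k^*$ to show the limiting one-sided direction lies in $T\Gamma$, contradicting the non-demi-tangency of $\gamma_x$; it must also handle the degenerate subcase where the limit curve runs \emph{inside} $\Gamma$ near $s^*$ (the footnoted possibility of oscillatory interfaces). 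Your closing paragraph names roughly the right ingredients, but this $C^0$-to-$C^1$ upgrade is the crux and needs to be carried out. Two smaller, easily repaired omissions: you should note $x_k\notin\Gamma$ for large $k$ (closedness of $\Gamma$ plus $x\notin\Gamma$), since almost-regularity requires it; and when an $x_k$ has several minimizers, the convergence argument must be applied to all of them simultaneously before injectivity of $\exp_{\bdy\Omega}|_U$ can deliver uniqueness of the minimizer.
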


The significance of regular points are that these are the points where $c$ can be directly reconstructed, leading to the following property:

\begin{definition}
	$(\Omega,c)$ is \emph{totally regular} if almost every $x\in\Omega$ is regular.
	\label{d:totally-regular}
\end{definition}

\begin{figure}
	\centering
	\includegraphics{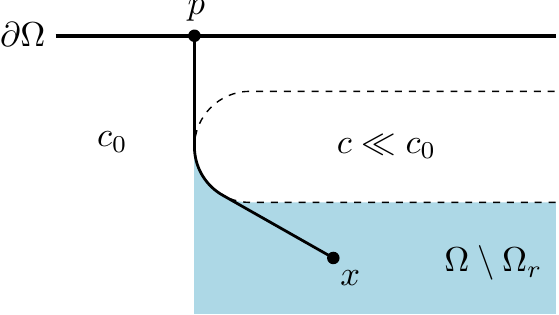}
	\caption{Failure of total regularity. Here $c$ is piecewise constant, equal to $c_0$ except on an open domain $D$ where $c\ll c_0$ (dashed). The minimal-length path from the boundary to any point $x$ in the shaded region contains a non-trivial portion of $\bdy D$.}
	\label{f:bgnc-failure}
\end{figure}

Unlike the case for smooth $c$, many reasonable choices of $(\Omega,c)$ are not totally regular. As Figure~\ref{f:bgnc-failure} illustrates, broken geodesic normal coordinates can fail to cover all of $\Omega$. In this example a single $p$ is the closest boundary point to every point in an open subset of $\Omega$. On the metric side, this occurs when minimal length paths travel along interfaces, a case we specifically excluded earlier. Conversely, if the interfaces are all strictly convex (viewed from the inside), paths along interfaces are never minimal, and in fact, $(\Omega,c)$ must be totally regular.

\begin{lemma}
	If $\Omega$ is compact and the interfaces $\Gamma_i$ are strictly convex, as viewed from their interiors, then $(\Omega,c)$ is totally regular.
	\label{l:convex-ae-regular}
\end{lemma}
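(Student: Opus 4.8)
The plan is to prove total regularity by showing that the set of irregular points is null, decomposing the possible failures of regularity into three types and bounding each in measure. Recall that $x$ fails to be regular if either (i) its distance minimizer to $\bdy\Omega$ is nonunique, (ii) some minimizer is demi-tangent to $\Gamma\cup\bdy\Omega$, or (iii) $x$ is a focal point, i.e.\ $d\exp_{\bdy\Omega}$ degenerates at $(p(x),T(x))$. I would show that strict convexity eliminates (ii) outright for \emph{every} $x$, while (i) and (iii) always occupy null sets; intersecting the resulting full-measure sets yields regularity almost everywhere.

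\emph{Convexity rules out interface-hugging minimizers (failure (ii)).} This is where strict convexity is essential and where I expect the main difficulty. Using compactness of $\clsr\Omega$, which guarantees minimizers exist via the Arzel\`a--Ascoli remark, together with a first-variation comparison, I would argue that no length-minimizing path can contain a nondegenerate arc lying along an interface $\Gamma_i$: if it did, strict convexity of $\Gamma_i$ as seen from its interior would let me replace that arc by a geodesic chord pushed slightly onto the convex side, strictly decreasing length and contradicting minimality. A closely related perturbation argument should exclude isolated tangential (grazing) contacts as well, so that every minimizer meets each interface transversally and obeys Snell's law --- that is, it is a transmitted broken geodesic --- and meets $\bdy\Omega$ normally (by the free-endpoint first variation). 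This is the same conclusion as Lemma~\ref{l:minimal-is-normal}, now obtained for minimizers directly from convexity rather than presupposing almost regularity. Since transversal crossings and normal incidence are by definition not demi-tangent, failure (ii) never occurs.

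\emph{Nonuniqueness is a null set (failure (i)).} The function $T(x)=d(x,\bdy\Omega)$ is Lipschitz, as $c,c^{-1}\in L^\infty$, hence differentiable almost everywhere by Rademacher's theorem. At any point of differentiability a standard first-variation argument forces the minimizer to be unique, its direction at $x$ being determined by $\gradient T(x)$; two distinct minimizers would produce a corner in $T$. Thus the set of $x$ carrying two or more minimizers lies in the non-differentiability set of $T$ and is null. Combined with the previous step, almost every $x$ is therefore almost regular, with a unique minimizer that crosses every interface transversally.

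\emph{Focal points are a null set (failure (iii)), and conclusion.} On the open set of initial data $(p,T)$ whose geodesics cross all interfaces transversally, the broken geodesic flow depends smoothly on $(p,T)$ --- refraction under transversality is smooth by the implicit function theorem --- so $\exp_{\bdy\Omega}$ is smooth there. A point $x$ is focal precisely when it is a critical value of this map, so by Sard's theorem the focal points form a measure-zero subset of $\Omega$; meanwhile the non-transversal (tangent) initial directions form a codimension-one, hence null, set, whose image under the locally Lipschitz map $\exp_{\bdy\Omega}$ is again null. Intersecting the three full-measure sets produced above shows that almost every $x\in\Omega$ is almost regular and non-focal, i.e.\ regular, so $(\Omega,c)$ is totally regular. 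The delicate point throughout is the convexity argument of the second paragraph, which must rigorously convert the geometric ``convex from the interior'' hypothesis into the strict length decrease of the shortcut.
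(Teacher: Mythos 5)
There is a genuine gap at the center of your argument. In your second step you claim that strict convexity excludes not only arcs running along an interface but also isolated tangential (grazing) contacts, so that every minimizer crosses every interface transversally and failure (ii) ``never occurs.'' That claim is false, and the paper's own proof explicitly concedes the contrary: a minimizer may touch an interface $\Gamma_i$ tangentially (at most once, remaining outside the enclosed domain $\Omega_i$). Tangency from the convex side costs nothing in length --- if $c$ is slower inside a strictly convex inclusion, a straight segment grazing the inclusion is a genuine minimizer, and at the grazing point the velocity lies in $T\Gamma$, so the contact is demi-tangent and the points behind it along the minimizer fail to be almost regular. No shortcut perturbation is available here, since the path does not cross and cannot be ``rounded.'' Because your architecture feeds this transversality into everything downstream --- almost regularity requires nowhere demi-tangency, not merely the a.e.\ uniqueness your Rademacher step supplies, and your Sard step needs $\exp_{\bdy\Omega}$ smooth, which requires transversal crossings --- the collapse of this step propagates through the whole proof. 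Your fallback remark that ``the non-transversal (tangent) initial directions form a codimension-one, hence null, set'' is exactly the nontrivial assertion at stake; asserting it is not a proof, and establishing it is the actual heart of the matter.

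The paper handles glancing by showing it is non-generic rather than impossible: with $N$ the flowout of $N^*\bdy\Omega$ under the broken geodesic flow and $\mathcal G$ the flowout of $T^*\Gamma$, it verifies $T_{y,\eta}N\not\subset T_{y,\eta}\mathcal G$ at any intersection by perturbing the glancing point inward along the interface normal, with convexity guaranteeing the perturbed transmitted geodesic cannot glance; hence non-glancing normal geodesics are dense among normal geodesics. It then shows every interior point of a minimizing segment has a \emph{unique} minimizer (a second one would create a corner or violate Snell's law, contradicting the broken-geodesic structure from Lemma~\ref{l:minimal-is-normal}), and excludes focal points on such segments via Lemma~\ref{l:past-focal} (normal broken geodesics do not minimize past a focal point) rather than via Sard's theorem. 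Your Rademacher argument for a.e.\ uniqueness and your Sard idea are reasonable ingredients in themselves, but without a correct treatment of glancing --- the paper's flowout perturbation lemma or an equivalent --- your decomposition does not yield that almost every point is regular.
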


\subsection{Wave speed recovery}           \label{s:c-recovery}

In the boundary control method, the \emph{Blagovestchenskii identity} allows the computation of inner products between wave fields generated by boundary controls, given only the Neumann-to-Dirichlet map. A similar identity calculates inner products between a wave field and a harmonic function. Because wave propagation is a unitary map (energy-conserving), the Blagovestchenskii identity's analogue for Cauchy data is simply the usual energy inner product. Finding inner products with harmonic functions requires only slightly more work, and relies on the fact that the wave equation~\eqref{e:wave-eqn} preserves harmonic functions.

\begin{lemma}
	For any $h_0\in\mathbf C$ and any harmonic functions $f,g$,
	\begin{align}
	\form{\overline\pi_T R_Th_0, (f,g)} &= \lim_{k\to\infty} \big[\form[\big]{h_k, (f-Tg,g)} - \form[\big]{\pi^\star R_{2T}h_k, (f+Tg,g)}\big].
	\end{align}
	If the scattering control series converges, $h_k$ can be replaced above by $h_\infty$ and the limit omitted.
	\label{l:harmonic-ip}
\end{lemma}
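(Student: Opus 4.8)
The plan is to exploit the single special feature of harmonic Cauchy data under the flow~\eqref{e:wave-eqn}: if $f,g$ are harmonic, then $u(s,x)=f(x)+s\,g(x)$ solves the wave equation, since $\partial_s^2u=0$ and $c^2\Delta u=0$. Hence the propagator acts by $R_s(f,g)=(f+sg,g)$, so that $(f-Tg,g)=R_{-T}(f,g)$ and $(f+Tg,g)=R_T(f,g)$. The two shifted test functions on the right-hand side are therefore nothing but the time-$\mp T$ advances of the single harmonic pair $(f,g)$. The whole identity should then follow by sliding these advances onto the wavefields using energy-unitarity of $R_s$ and self-adjointness of the projections, and invoking the interior (deep-field) limit of Theorem~\ref{t:limit-maf}.

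Concretely, I would start from that interior limit, $R_{-T}\overline\pi R_{2T}h_k\to h\DT=\overline\pi_T R_Th_0$, and pair it against $(f,g)$. Since $R_{-T}$ is energy-unitary with $R_{-T}^{*}=R_T$,
\begin{equation*}
	\form{h\DT,(f,g)} = \lim_{k\to\infty}\form{\overline\pi R_{2T}h_k,\,R_T(f,g)} = \lim_{k\to\infty}\form{\overline\pi R_{2T}h_k,\,(f+Tg,g)}.
\end{equation*}
Next I would use that the inside projection $\overline\pi$ and the outside projection $\pi^\star$ are complementary orthogonal projections on $\mathbf C$, so that $\overline\pi=I-\pi^\star$ (the harmonic extension built into $\overline\pi$ supplies exactly the interface modes that $\pi^\star$ discards). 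Writing $\overline\pi R_{2T}h_k=R_{2T}h_k-\pi^\star R_{2T}h_k$ reproduces the second term of the lemma verbatim, while for the first term two further uses of unitarity give
\begin{equation*}
	\form{R_{2T}h_k,(f+Tg,g)} = \form{R_{2T}h_k,R_T(f,g)} = \form{R_Th_k,(f,g)} = \form{h_k,(f-Tg,g)}.
\end{equation*}
Assembling the two pieces and passing to the limit yields the stated formula; the limit exists because the inner limit already converges in $\mathbf C$ and $(f,g)$ induces a bounded functional there (Cauchy--Schwarz, as $f,g$ are smooth on $\clsr\Upsilon$). When the scattering control series converges, $h_k\to h_\infty$ in $\mathbf C$, so one replaces $h_k$ by $h_\infty$ and drops the limit.

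The delicate point, and the step I expect to be the main obstacle, is that $(f,g)$ is \emph{not} an element of $\tilde{\mathbf C}$: a harmonic $f$ has nonzero trace on $\bdy\Upsilon$, so neither the stationarity relation $R_s(f,g)=(f+sg,g)$ nor the adjoint moves $\form{R_sh,(f,g)}=\form{h,R_{-s}(f,g)}$ are literal statements in the energy Hilbert space. The honest justification is a polarized energy-conservation computation: for a genuine wavefield $u$ of data in $\mathbf C$ and the harmonic solution $v=f+sg$, one differentiates $\form{(u,\partial_su),(v,\partial_sv)}$ in $s$ and integrates by parts, finding that the only surviving contributions are boundary integrals over $\bdy\Upsilon$ involving $u$, $\partial_su$, and $\partial_n u$. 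These vanish because finite speed of propagation, together with the geometric separation $d(\bdy\Upsilon,\clsr\Theta)>2T$, keep every wavefield appearing in the computation (and its normal derivative) identically zero near $\bdy\Upsilon$ throughout the window $\abs{s}\le 2T$; this is exactly where the standing hypothesis on $\Upsilon$ is used. Granting this, the pairing is $s$-independent in the required sense and all of the manipulations above are legitimate.
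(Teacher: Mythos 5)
Your core argument is correct and is essentially the paper's own proof: the same observation that $f+tg$ solves the wave equation for any $c$, the same starting point $\lim_{k\to\infty}\overline\pi R_{2T}h_k=R_T\overline\pi_T R_Th_0$ from Theorem~\ref{t:limit-maf}, the same split $\overline\pi=I-\pi^\star$ (which is indeed how the paper implicitly uses the projections, consistent with the Dirichlet-principle description in Section~\ref{s:projections-setup}), and the same unitarity shifts --- the paper performs one $R_{-2T}$ move where you perform two $R_{\mp T}$ moves, an immaterial difference.

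The one place where you go beyond the paper (which invokes unitarity without comment) is your final ``delicate point'' paragraph, and there your justification overclaims. It is not true that every wavefield appearing is identically zero near $\bdy\Upsilon$ throughout $\abs{s}\le 2T$: the controls $h_k-h_0$ lie in $\mathbf H^\star$, whose elements --- including the harmonic tails that $\pi^\star$ and $\overline\pi$ create --- may be supported all the way up to $\bdy\Upsilon$, and since $h_k$ is built from $k$ rounds of time-$2T$ propagation, for $k\ge 1$ the separation $d(\bdy\Upsilon,\clsr\Theta)>2T$ does not confine its wavefield away from $\bdy\Upsilon$; that condition controls waves emanating from $\Theta$, not data already living near $\bdy\Upsilon$. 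So the boundary integrals over $\bdy\Upsilon$ in your polarized computation need not vanish for the reason you give. The clean repair is different: the harmonic pairs actually used are harmonic on all of $\RR^n$ (coordinate functions and constants in Theorem~\ref{t:bnc-to-Euclidean}), and every left-hand entry of the pairings in the lemma ($h_k$, $\pi^\star R_{2T}h_k$, $\overline\pi_T R_Th_0$) is supported in $\clsr\Upsilon$, so each $\Upsilon$-pairing coincides with the corresponding pairing over $\RR^n$. Over $\RR^n$ the wavefields are compactly supported at every time by finite speed of propagation, and differentiating the polarized form in $s$ produces terms such as $\int\nabla u_s\cdot\nabla\bar v$ and $\int\Delta u\,\bar v_s$ that cancel under integration by parts with no boundary contribution whatsoever, using $\Delta v=0$ globally. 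Substituting this for your boundary-vanishing claim makes your argument complete; everything else in the proposal stands.
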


\begin{proof}
We begin with the observation that $f(x)+tg(x)$ is a solution of the wave equation~\eqref{e:wave-eqn} for any $c$ whenever $f,g$ are harmonic. Defining $h_k$ as before, recall from Theorem~\ref{t:limit-maf} that
\begin{equation}
	\lim_{k\to\infty} \overline\pi R_{2T}h_k = R_T \overline\pi_T R_Th_0.
\end{equation}
As a result, it is possible to compute inner products of $\overline\pi_T R_Th_0$, for arbitrary $h_0$, with arbitrary harmonic Cauchy data $(f,g)$. Namely,
\begin{nalign}
	\form{\overline\pi_T R_Th_0, (f,g)} &= \form{R_T \overline\pi_T R_Th_0, (f+Tg,g)}\\
		&= \lim_{k\to\infty} \form{\overline\pi R_{2T}h_k, (f+Tg,g)}.\\
		&= \lim_{k\to\infty} \left[\form{R_{2T}h_k, (f+Tg,g)} - \form{\pi^\star R_{2T}h_k, (f+Tg,g)}\right].
\end{nalign}
The second term is already computable from outside data. For the first term, we can move the inner product back by time $2T$ (by unitarity of $R_{-2T}$ with respect to the energy norm). Since $R_{-2T}(f+Tg,g)=(f-Tg,g)$,
\begin{nalign}
		\form{\overline\pi_T R_Th_0, (f,g)} 
		&= \lim_{k\to\infty} \left[\form{h_k, (f-Tg,g)} - \form{\pi^\star R_{2T}h_k, (f+Tg,g)}\right].
\end{nalign}
When the scattering control series converges, the limit in $k$ can be taken inside.
\end{proof}
\begin{figure}
	\centering
	\includegraphics{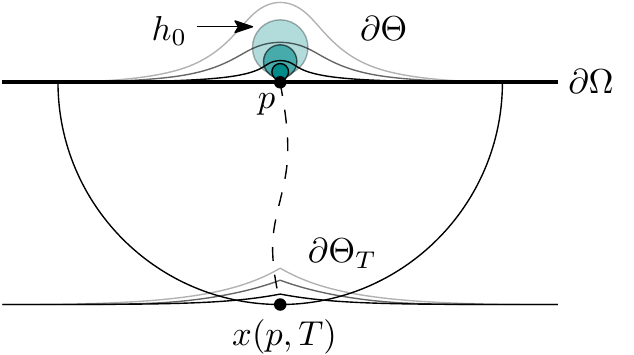}
	\caption{Shrinking the support of the initial data $h_0$ to a point. The dashed line indicates the normal geodesic from that point; the support of the almost direct transmission shrinks to a point on the geodesic.}
	\label{f:adt-shrink}
\end{figure}
The appeal of the lemma is that the almost direct transmission $\overline\pi_T R_Th_0$ in general may be arbitrarily spatially concentrated (aside from harmonic extensions in the first component). Taking inner products with the harmonic data $(0,x^i)$ and $(0,1)$, we may now recover weighted averages of $x^i$ over this support. As long as $\overline\pi_T R_Th_0$ is not oscillatory, this provides us with approximate Euclidean coordinates for the support, becoming exact in the limit as $\Theta\to\Omega$. By appropriately choosing $h_0$ and a sequence of domains $\Theta^{(j)}$ tending to $\Omega$, Euclidean coordinates $x(p,T)$ can be obtained for any point $(p,T)\in\Omega_r$ in broken geodesic normal coordinates~(Figure~\ref{f:adt-shrink}), yielding a coordinate transformation $\Phi\colon(p,T)\mapsto x(p,T)$. Once this coordinate transformation is known, $c$ can be recovered immediately by taking a derivative in $T$.

\begin{mainthm}
Let $y=(y^1,\dotsc,y^n)\in\Omega_r$, $p=p(y)\in\bdy\Omega$, and $T=d(y,\bdy\Omega)$; let $x^i$ denote the $i\mith$ Euclidean coordinate function. 
Choose a nested sequence of Lipschitz domains $\Theta^{(1)}\supset\Theta^{(2)}\supset\dotsb\supset\Omega$ such that $\smash{\bigcap_j}\Theta^{(j)}=\Omega\cup\{p\}$ and $\diam\Theta^{(j)}\setminus\Omega\to 0$. Then
\begin{align}
	y^i &= \Phi^i(p,T) = \lim_{j\to\infty} \frac{\kappa(\indicator_{\Theta^{(j)}\setminus\Omega}, x^i)}{\kappa(\indicator_{\Theta^{(j)}\setminus\Omega},1)},
	\label{e:coord-formula}
\end{align}
where $\kappa(g,f) = \form{\overline\pi_TR_T(0,\pi_{\mathbf C}g),\,(0,f)}$, and $\indicator_X$ represents the indicator function of $X$. Finally,
\begin{equation}
	c = \abs*{\dmp \Phi T}.
\end{equation}
\label{t:bnc-to-Euclidean}
\end{mainthm}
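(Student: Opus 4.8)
The plan is to combine the computability of $\kappa$ afforded by Lemma~\ref{l:harmonic-ip} with a geometric-optics analysis showing that the harmonic almost direct transmission concentrates at $y$ as the probe region shrinks, and then to read off $c$ by differentiating the resulting coordinate map.

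First I would confirm that the right-hand side of~\eqref{e:coord-formula} is genuinely determined by $\mathcal F$. Each value $\kappa(\indicator_{\Theta^{(j)}\setminus\Omega}, f)$ with $f\in\{x^i,1\}$ is the inner product $\form{\overline\pi_T R_T(0,\pi_{\mathbf C}\indicator_{\Theta^{(j)}\setminus\Omega}),\,(0,f)}$ of the almost direct transmission against the harmonic Cauchy datum $(0,f)$ (both $x^i$ and $1$ are harmonic). By Lemma~\ref{l:harmonic-ip} this equals a limit of expressions in the scattering-control partial sums $h_k$, each computable from outside data. Unwinding the energy inner product, and noting that the first slot of the test datum vanishes, I get $\kappa(g,f)=\int_{\Theta_T} c^{-2}a_1 f\,dx$, where $a_1$ is the second component of $\overline\pi_T R_T(0,\pi_{\mathbf C}g)$ --- supported inside $\Theta_T$ and extended by zero; the harmonic extension living in the first component never enters.

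Next --- the technical heart --- I would analyze the concentration of $a_1=a_1^{(j)}$ as $j\to\infty$. The initial datum is a velocity pulse supported in the shrinking shell $\Theta^{(j)}\setminus\Omega$, which collapses to the single boundary point $p$ because $\bigcap_j\Theta^{(j)}=\Omega\cup\{p\}$. By Theorem~\ref{t:limit-maf} the almost direct transmission isolates the deepest, purely transmitted part of the time-$T$ wavefield, stripped of multiple reflections. Using the graph-FIO description of transmitted propagation together with Lemma~\ref{l:minimal-is-normal}, this part rides the broken geodesic normal to $\bdy\Omega$ issuing from $p$, and at time $T$, depth $T$ it sits at the endpoint $\exp_{\bdy\Omega}(p,T)=y$. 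Regularity of $y$ (a unique, nowhere demi-tangent minimizer, plus the local diffeomorphism of Lemma~\ref{l:bgnc-smooth}) guarantees that this endpoint is a single clean point and that the transmission coefficients do not vanish, so that $a_1^{(j)}$ concentrates at $y$ with asymptotically definite sign rather than as an oscillatory packet.

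With concentration established, I would read~\eqref{e:coord-formula} as a barycenter identity: the ratio $\kappa(\indicator,x^i)/\kappa(\indicator,1)$ is exactly the $x^i$-center of mass of the measure $c^{-2}a_1^{(j)}\,dx$. A definite sign keeps the denominator bounded away from zero, while concentration at $y$ forces the barycenter to converge to $y^i$, giving $\Phi^i(p,T)=y^i$. I expect the main obstacle to lie precisely here: rigorously controlling the support and the non-oscillatory (definite-sign) structure of $a_1^{(j)}$ in the piecewise-smooth, variable-coefficient setting, and verifying that the barycenters converge uniformly enough to move the limit through the ratio. Recovering $c$ is then immediate: the curve $T\mapsto\Phi(p,T)=\gamma_{p,\nu(p)}(T)$ is the normal geodesic, unit speed in $g=c^{-2}\,dx^2$, so $c^{-2}\abs*{\dmp\Phi T}^2=1$ at $y$, whence $c(y)=\abs*{\dmp\Phi T}$, as claimed.
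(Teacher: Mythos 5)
Your outline tracks the paper's actual proof step for step: computability of $\kappa$ via Lemma~\ref{l:harmonic-ip}, reduction of the pairing to $\int c^{-2}a_1 f\,dx$ (correct --- the test datum $(0,f)$ kills the gradient term, so the harmonic extension in the first component never enters), a barycenter/squeeze argument on the shrinking support, and recovery of $c$ from unit speed of the normal broken geodesic in $g=c^{-2}dx^2$. But the step you yourself flag as the ``main obstacle'' is precisely the content of Proposition~\ref{p:behavior-near-wf}, and as written it is a genuine gap. ``Regularity of $y$ plus nonvanishing transmission coefficients'' does not by itself exclude oscillatory structure or yield a definite sign: the paper proves this by treating the datum $(0,\indicator v)$ as a conormal distribution on $\bdy\Omega$, using regularity of $y$ to discard, microlocally near $\exp_{\bdy\Omega}(p,T)$, every graph-FIO constituent of $R_T^+$ except the directly transmitted one $\RDT_T^+$, and then invoking the polyhomogeneous symbol calculus (\cite[Theorem 18.2.8]{H3}) to obtain the progressive-wave expansion $a_0(q)H(s-T)+a_1(q)(s-T)_+$ with $a_0$ smooth and nonzero (the principal symbol of $\RDT_T^+$). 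Together with finite speed of propagation this shows the wavefield is one-signed on the thin shell behind the front; that expansion, or an equivalent, is what your sketch must actually supply.

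Two further points the paper handles that your writeup leaves open. First, the sign must be definite \emph{uniformly in $j$}: the paper fixes a single bump function $v$ with $v(p)=1$ and uses finite speed of propagation to show the almost direct transmissions of $(0,v\indicator_{\Omega^\star})$ and $(0,\indicator_{\Theta^{(j)}\setminus\Omega})$ coincide for large $j$, so Proposition~\ref{p:behavior-near-wf}, applied once with $j$-independent data, furnishes a single neighborhood $U$ of $y$ on which the sign is definite for all large $j$. Your version, phrased for $j$-dependent fields $a_1^{(j)}$, would need this uniformity argued separately. Second, ``concentration at $y$'' requires the elementary but necessary metric estimate $\diam\big(\Theta^{(j)}_T\setminus\Omega_T\big)\le C\eps_j\to 0$, which the paper proves directly by comparing closest boundary points and depths; the FIO propagation picture gives wavefront location only, not support shrinkage of the exact (non-asymptotic) wavefield, and it is the support, not the wavefront, that enters the squeeze $\inf x^i \le \kappa(\cdot,x^i)/\kappa(\cdot,1)\le \sup x^i$. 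With those two items supplied, your barycenter argument and the derivative step $c=\abs{\d\Phi/\d T}$ go through exactly as in the paper.
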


Theorem~\ref{t:bnc-to-Euclidean} solves the inverse problem and provides a reconstruction formula for $c$ in Euclidean coordinates, since $\kappa$ can be computed from outside measurements using the scattering control series (Lemma~\ref{l:harmonic-ip}). Note that $\kappa$ and $\overline\pi$ in the statement of the theorem depend on $\Theta^{(j)}$ implicitly. Uniqueness follows immediately:

\begin{corollary}
	Assume that $(\Omega,c)$ are such that $\Omega_r$ is dense in $\Omega$. Then $c$ is uniquely determined on $\Omega_T^\star$ by $R_{2T}\big|_{\Omega^\star}$.
	\label{c:harmonic-uniqueness}
\end{corollary}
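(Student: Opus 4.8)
The plan is to globalize Theorem~\ref{t:bnc-to-Euclidean}, which already reconstructs $c$ and the Euclidean coordinates pointwise at regular points, by combining it with the density hypothesis and the piecewise smoothness of $c$. First I would record a bookkeeping observation. To apply Theorem~\ref{t:bnc-to-Euclidean} at a regular point $y$ of depth $T'=d(y,\bdy\Omega)<T$, one computes $\kappa$ via the scattering control series with round-trip operator $\nu R_{2T'}$, that is, via Lemma~\ref{l:harmonic-ip} with time parameter $T'$. All the operators $R_{s}\big|_{\Omega^\star}$ for $s\le 2T$ are furnished by the data (reading off the exterior Cauchy data $(u,\d_t u)\big|_{\Omega^c}$ at intermediate times, equivalently restricting $\mathcal F$ to $[0,2T]$), so $\kappa$, and hence the right-hand side of~\eqref{e:coord-formula}, is computable from $R_{2T}\big|_{\Omega^\star}$ for every $(p,T')\in\bdy\Omega\times(0,T)$.

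Next I would run the reconstruction as a map on the data alone. For each $(p,T')$ I evaluate the limit~\eqref{e:coord-formula}; by Theorem~\ref{t:bnc-to-Euclidean} and the accompanying discussion of when the almost direct transmission is concentrated and non-oscillatory, this limit returns a genuine Euclidean point together with its speed exactly when $\exp_{\bdy\Omega}(p,T')$ is regular, in which case it yields the pair $\big(\Phi(p,T'),\,\lvert\partial_{T'}\Phi\rvert\big)=\big(\exp_{\bdy\Omega}(p,T'),\,c(\exp_{\bdy\Omega}(p,T'))\big)$. Collecting these pairs over all $(p,T')$ therefore produces, purely from the data, the graph of $c$ restricted to $\Omega_r\cap\Omega_T^\star$. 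Since recovering a graph recovers its domain (its projection to the first factor) together with its values, two speeds with identical data have identical recovered regular sets inside $\Omega_T^\star$ and identical values of $c$ there.

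Finally I would extend from this dense set to all of $\Omega_T^\star$. By hypothesis $\Omega_r$ is dense in $\Omega$, so $\Omega_r\cap\Omega_T^\star$ is dense in the open set $\Omega\cap\Omega_T^\star=\{x\in\Omega:d(x,\bdy\Omega)<T\}$, while outside $\Omega$ the speed is already known. On each smooth piece $\Omega_j$ the restriction $c=c_j$ is continuous, so its values on $\Omega_r\cap\Omega_j\cap\Omega_T^\star$ determine it on all of $\Omega_j\cap\Omega_T^\star$. Because interfaces are precisely where $c$ is genuinely singular, two speeds agreeing on this dense open set of full measure must carry the same interface set inside $\Omega_T^\star$, and lower semicontinuity then pins down the common value on the interfaces. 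Combined with the known exterior, this gives $c$ on all of $\Omega_T^\star$ and hence uniqueness.

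The main obstacle I expect lies in this last globalization rather than in the first two steps: because $c$ is only piecewise smooth, there is no global continuity to propagate agreement across interfaces, and a priori two candidate speeds could carry different interface geometries. The crux is to show that agreement on a dense open set of full measure forces the singular sets to coincide — a genuine jump cannot be present for one speed and absent for the other if the two agree almost everywhere — and then to use lower semicontinuity to fix the interface values. A secondary, more routine subtlety is confirming that the single-time notation $R_{2T}\big|_{\Omega^\star}$ indeed supplies every intermediate round-trip operator $R_{2T'}\big|_{\Omega^\star}$ with $T'<T$ needed to reach all depths below $T$.
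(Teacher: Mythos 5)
Your proposal matches the paper's (essentially unwritten) argument: the corollary is presented as an immediate consequence of Theorem~\ref{t:bnc-to-Euclidean} --- reconstruct $c$ at every regular point of depth less than $T$ via~\eqref{e:coord-formula}, then extend by density of $\Omega_r$ and continuity of $c$ on each smooth piece, exactly as you do. The two subtleties you flag resolve as the paper intends: the data is really the outside measurement operator $\mathcal F$ observed through time $2T$, which supplies every $R_{2T'}\big|_{\Omega^\star}$ with $T'\le T$, and the values of $c$ on the measure-zero set $\Gamma$ (where your lower-semicontinuity step slightly overreaches, since l.s.c.\ only bounds $c$ there) are immaterial for uniqueness of $c$ as an $L^\infty$ coefficient.
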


\begin{remark}
	In their work, de Hoop, Kepley, and Oksanen~\cite{DKO,DKO2} use boundary controls supported on appropriate subsets of the boundary for certain time intervals, analogous to our $\Theta^{(j)}$ and $\Omega$. Their boundary controls produce \emph{wave caps} with supports similar to that of the almost direct transmission. While no formal link has yet been established between the two approaches, they are evidently closely related.
\end{remark}

\begin{remark}
	The harmonic function approach is relatively insensitive to the structure of $c$, including the locations of interfaces, if any. After recovering $c$, the Euclidean coordinates of the interfaces can be found by directly examining the reconstructed $c$.
\end{remark}

We start by stating an unsurprising but useful result about the behavior of solutions near the boundary of their domain of influence. As we do not know of a proof of it in the literature (for piecewise smooth $c$), we prove it here.
 
\begin{proposition}
	Let $(p,T)\in\Omega_r$, and let $v\in C_c^\infty(\Upsilon)$, $v(p)=1$. Then there exists a neighborhood of $(p,T)$ on which
	\begin{equation}
		R_T(0,\indicator_{\Omega_j^\star} v)(q,s) = a_0(q)H(s-T) + a_1(q)(s-T)_+
		\label{e:behavior-near-wf}
	\end{equation}
	for some nonzero $C^\infty$ function $a_0$ and a bounded function $a_1$.
	\label{p:behavior-near-wf}
\end{proposition}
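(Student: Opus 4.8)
The plan is to realize the field $u_T:=R_T(0,\indicator_{\Omega_j^\star}v)$ as the image of a conormal distribution under the broken-geodesic propagator and then read off its leading singularity from a progressing-wave expansion. First I would record that the initial velocity $\indicator_{\Omega_j^\star}v$ is conormal with respect to the hypersurface $S=\bdy\Omega_j^\star$ near $p$, carrying a jump of size $v(p)=1\neq 0$; equivalently, $\WF(0,\indicator_{\Omega_j^\star}v)$ lies in the conormal bundle $N^*S$ over a neighborhood of $p$. Because $(p,T)$ is regular, Lemma~\ref{l:minimal-is-normal} pins down the relevant bicharacteristic: the one issuing conormally from $p$ follows the purely transmitted, distance-minimizing geodesic $\gamma_y$, which meets $\bdy\Omega$ normally, crosses each interface transversally, and is nowhere demi-tangent. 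Hence the forward flowout of $N^*S$ that reaches $y$ at time $T$ is, near $(y,T)$, carried by a single broken bicharacteristic.

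Next I would propagate. In each smooth layer the propagator is a graph FIO (a Hadamard/geometric-optics parametrix), and at each interface the singularity splits into reflected and transmitted parts via the Snell transmission and reflection graph FIOs. Following only the transmitted branch along $\gamma_y$, the composite is again a graph FIO whose canonical relation is the broken-geodesic flow; by Lemma~\ref{l:bgnc-smooth} (bijectivity of $d\exp_{\bdy\Omega}$ at the regular point) this relation is a genuine local canonical graph, free of folds or conjugate points over a neighborhood of $(y,T)$. Applying it to a conormal distribution yields a conormal distribution with respect to the smooth flowout hypersurface $\{\Phi=0\}$, where $\Phi=s-T$ in boundary normal coordinates. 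The Heaviside-type jump in the data places the leading term at Heaviside order, producing the progressing-wave form $a_0(q)H(s-T)+a_1(q)(s-T)_+$ modulo a remainder that vanishes to higher order across the front; on a small enough neighborhood this gives the stated equality, with $a_0\in C^\infty$ and $a_1$ at worst bounded.

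The crux is the nonvanishing of the principal coefficient $a_0$. I would obtain it from the leading transport equation along $\gamma_y$: $a_0$ equals the initial jump amplitude (proportional to $v(p)=1$) times the product of the transmission coefficients at the interface crossings times a geometric-spreading (van Vleck) factor. The transmission coefficients are nonzero because each crossing is transversal and below the critical angle — a transmitted geodesic exists precisely when total internal reflection does not — and the spreading factor is finite and nonzero exactly because $y$ is \emph{regular}, i.e.\ $(p,T)$ carries no conjugate or focal point and the coordinate Jacobian stays bounded away from $0$ and $\infty$ nearby. The same regularity makes $a_0$ smooth in $q$ and confines the potential blow-up of $a_1$ (at caustics) away from $(p,T)$. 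Finally I would verify that nothing competes with this front: since $T=d(y,\bdy\Omega)$ is the first-arrival time, every reflected branch and every non-minimal path reaches $y$ strictly after $T$, so on a sufficiently small neighborhood only the transmitted front contributes at the claimed orders.

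I expect the main obstacle to be the rigorous bookkeeping of the interface transmissions for merely piecewise smooth $c$: showing that the reflection/transmission operators are graph FIOs with nonvanishing transmitted principal symbols, that their composition remains a clean canonical graph through several crossings, and that this composition degenerates \emph{only} at focal points — so that the nonvanishing of $a_0$ becomes exactly equivalent to the regular-point hypothesis.
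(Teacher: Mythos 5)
Your proposal is correct and takes essentially the paper's approach: regard the initial data as conormal to the jump hypersurface, use regularity of $(p,T)$ to reduce $R_T$ microlocally near $\exp_{\bdy\Omega}(p,T)$ to the directly transmitted graph FIO (the upward-moving and reflected branches carry their singularities elsewhere at time $T$), and obtain the nonvanishing of $a_0$ from that operator's principal symbol, a product of nonzero transmission coefficients at transversal, noncritical crossings. The only difference is one of implementation: where you invoke a classical progressing-wave/transport-equation expansion, the paper first splits into half-wave components $g_\pm$ (so that $R_Th_0\eqml 2\Re R_T^+g_+$), views the data as an FIO from a point space to make the composition well defined, and extracts the exact form \eqref{e:behavior-near-wf} from the polyhomogeneous conormal symbol calculus of \cite[Theorem 18.2.8]{H3} combined with finite speed of propagation.
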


\noindent Essentially, $a_0$ is the principal symbol of the purely transmitted graph FIO component of $R_T$.

\begin{proof}
To prove the lemma, we consider the initial data $h_0=(0,\indicator_{\smash{\Omega_j^\star}} v)$ as a conormal distribution on $\bdy\Omega$, apply the FIO composition calculus, then recover the progressive wave expansion~\eqref{e:behavior-near-wf} from the symbol of the resulting (polyhomogeneous) conormal distribution. As an alternative route, it may be possible to use Weinstein's principal symbols for arbitrary distributions~\cite{Wein} to allow for more general initial data.

As in section~\ref{s:wave-ml}, Cauchy data $h_0$ splits into forward- and backward-moving components $g_\pm=\pm\frac i2B^{-1}\indicator_{\Theta_J}$, and $R_Th_0=R_T^+g_++R_T^-g_-$ for half-wave solution operators $R_T^\pm$ which are order-0 FIOs away from glancing. Conjugating $(\d_t+iB)R_T^+g_+=0$, we have $R_T^-g_- = \smash{\conj{R_T^+g_+}}$ and hence $R_Th_0\eqml 2\Re R_T^+g_+$. As in the proof of Theorem~\ref{t:energy-ps-recovery}, the assumption $(p,T)\in\Omega_r$ will imply $R_T^+g_+\eqml\RDT_T^+g_+$ in a neighborhood of $x=\exp_{\bdy\Omega}(p,T)$, where $\RDT_T^+$ is the directly transmitted graph FIO component of $R_T^+$, defined as in~\eqref{e:DT-op-def}.


Let $Z=\RR^0$ be a one-point space, and define a Fourier integral operator $u\in I(Z\to\Upsilon)$ by $u(a)=ag_+$. Then $R_T^+\circ u$ is well-defined as a Fourier integral operator (distribution). In broken boundary normal coordinates (relative to $\bdy\Omega$) the initial wavefront set is $\WF u=\set{(p,0;0,\sigma)}{p\in\bdy\Omega\cap\supp v}$. The canonical relation of $\RDT_T^+$, given by the purely transmitted geodesic flow, acts as translation by $(0,T)$ on the downward ($\sigma>0$) covectors in $\WF u$, mapping them into the conormal bundle of $\Omega_T$. The images of the upward covectors in $\WF u$ are distinct from $x$, so by compactness and continuity of the flow they are bounded away from $x$; similarly for the images of $\WF u$ under the remaining graph FIO components of $R_T^+$. Hence $R_Th_0 = 2\Re R_T^+g_+ \eqml u$ on a neighborhood of $x$ for some conormal distribution $u\in I^0(\Upsilon,\bdy\Omega_T)$. Note that Lemma~\ref{l:bgnc-smooth} implies $\bdy\Omega_T$ is smooth near $x$.

Applying~\cite[Theorem 18.2.8]{H3}, write
\begin{equation}
	u(q,s+T) = \int e^{-is\sigma} a(q,\sigma)\,d\sigma,
\end{equation}
for some symbol $a\in S^{-1}$. Since $h_0$ has a homogeneous symbol as a conormal distribution, $a$ is polyhomogeneous, allowing us to write
\begin{equation}
	a=a_0(q)\sigma^{-1} + a_1(q,\sigma)\sigma^{-2},
\end{equation}
where $a_1$ is bounded in $\sigma$, and $a_0$ given by the (nonzero) principal symbol of $\RDT_T^+$. Hence
\begin{equation}
	u(q,s) = a_0(q)H(s-T) - \fiv2a_0(q) + A_1(q,s-T),
\end{equation}
with $A_1\in H^1(\RR; C^\infty(\bdy\Omega_T))\subset C^0(\RR; C^\infty(\bdy\Omega_T))$. By finite speed of propagation $(R_Th_0)(q,s) = 0$ for $s>T$, implying~\eqref{e:behavior-near-wf}.
\end{proof}

With Proposition~\ref{p:behavior-near-wf} proved, the proof of Theorem~\ref{t:bnc-to-Euclidean} is straightforward.

\begin{proof}[Proof of Theorem~\ref{t:bnc-to-Euclidean}]
Choose a bump function $v\in C_c^\infty(\Upsilon)$ equal to 1 at $p$. For all sufficiently large $j$, the assumption $\diam\Theta^{(j)}\setminus\Omega\to 0$ implies $v\indicator_{\Omega^\star}=\indicator_{\Theta^{(j)}\setminus\Omega}$ outside $\Theta^{(j)}$, so by finite speed of propagation they lead to identical almost direct transmissions: $\overline\pi_T R_T (0,v\indicator_{\Omega^\star})=\overline\pi_T R_T (0,\indicator_{\Theta^{(j)}\setminus\Omega})$. Hence, by Proposition~\ref{p:behavior-near-wf}, $R_T (0,\indicator_{\Theta^{(j)}\setminus\Omega})$ is everywhere positive or everywhere negative on the intersection of $\Omega_T^\star$ with some neighborhood $U$ of $y$.

Next, we show $\diam\Theta^{(j)}_T\setminus\Omega^{}_T\to 0$. Let $y'=(p',s'),\,y''=(p'',s'')\in \Theta^{(j)}_T\setminus\Omega^{}_T$, and let $\eps_j = \diam\Theta^{(j)}\setminus\Omega^{}$. Since $\bdy\Omega\setminus B_{\eps_j}(p)\subset \bdy\Theta^{(j)}$ we are assured $\abs{p'-p''}<2\eps_j$. Next, for all $\delta>0$ there is a piecewise $C^1$ curve $\gamma$ of length $s+\delta$ connecting $y'$ to $\bdy\Omega$, and $d(\bdy\Omega,\bdy\Theta^{(j)})<\eps_j$. Hence $T>d(y',\bdy\Theta^{(j)})>s+\delta+\eps_j$, implying $s\in(T-\eps_j,T)$. Hence $\abs{s'-s''}<2\eps_j$, and we can conclude $\smash{\diam\Theta^{(j)}_T\setminus\Omega^{}_T}<C\eps_j\to 0$ for some constant $C$.

In particular, $\smash{\Theta^{(j)}_T\setminus\Omega^{}_T}$, which contains the support of the second component of $\clsr\pi_T R_T (0,\indicator_{\Theta^{(j)}\setminus\Omega})$, lies in $U\cap\Omega_T^\star$ for large enough $j$. For such $j$,
\begin{equation}
	\inf_{\mathclap{\Theta^{(j)}_T\setminus\Omega^{}_T}} x^i \leq \frac{\kappa(\indicator_{\Theta^{(j)}\setminus\Omega}, x^i)}{\kappa(\indicator_{\Theta^{(j)}\setminus\Omega},1)}
	\leq
	\sup_{\mathclap{\Theta^{(j)}_T\setminus\Omega^{}_T}} x^i.
\end{equation}
Note $\kappa$ is finite, by compactness of $\Theta^{(j)}_T\setminus\Omega^{}_T$ and the boundedness of $R_T(0,\indicator_{\Theta^{(j)}\setminus\Omega})$. As $\diam\Theta^{(j)}_T\setminus\Omega^{}_T\to 0$, the infimum and supremum above tend to the Euclidean coordinate $y^i$, completing the proof.
\end{proof}

\subsection{Uniqueness and layer stripping}                          \label{s:layer-stripping}

In this section, we combine Theorem~\ref{t:bnc-to-Euclidean} with a layer-stripping argument to prove uniqueness for all piecewise smooth $c$ with conormal singularities, even when $(\Omega,c)$ is not totally regular.

\begin{mainthm}
	$c$ is uniquely determined by $\mathcal F$.
	\label{t:complete-uniqueness}
\end{mainthm}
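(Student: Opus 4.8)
The plan is to bootstrap the regular-point reconstruction of Theorem~\ref{t:bnc-to-Euclidean} by a layer-stripping argument that advances a growing region of agreement. Suppose $c_1,c_2$ satisfy the conditions of Section~\ref{s:bgnc} and produce the same outside measurement operator $\mathcal F$. Since $c$ is prescribed on $\Omega^\star$, the two speeds already agree there; I would maintain a closed set $K\supseteq\Omega^\star$ on which $c_1=c_2$ has been established, and push its frontier $\Sigma=\bdy K\cap\Omega$ inward until $K\supseteq\clsr\Omega$. The engine is a boundary-determination step at $\Sigma$, fed by unique continuation through the already-resolved region $K$ and by the local reconstruction of Theorem~\ref{t:bnc-to-Euclidean}.

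For the advance step, fix Cauchy data supported in $\Omega^\star$; by $\mathcal F(c_1)=\mathcal F(c_2)$ the two solutions $u^1,u^2$ agree on $\Omega^\star$ for all time, hence share Cauchy data on $\bdy\Omega\times\RR$. Because $c_1=c_2$ on $K$, both $u^i$ solve the \emph{same} wave equation there, so unique continuation (of Tataru type) gives $u^1=u^2$ on $K$, and in particular on $\Sigma\times\RR$. This matched data determines the outside operator of the \emph{core problem} on $\Omega\setminus K$ (a Lipschitz domain with boundary $\Sigma$ for generic frontiers), so Theorem~\ref{t:bnc-to-Euclidean} may be applied to it. As a point approaches $\Sigma$, the minimizing broken geodesic to $\Sigma$ degenerates to the normal of $\Sigma$, which is insensitive to the interior values of $c$; hence the reconstructions built from the common core data agree in the limit and force $c_1=c_2$ on $\Sigma$. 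Smoothness of $c$ on each piece $\Omega_j$ then extends the equality to a collar beyond $\Sigma$, enlarging $K$.

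The advance can stall only where a normal push from $\Sigma$ would immediately run along an interface, i.e.\ become demi-tangent to $\Gamma$---these are exactly the irregular ``shadow'' configurations of Figure~\ref{f:bgnc-failure}, where the fastest path to the current frontier hugs an interface. Each such shadow component is, however, surrounded by already-resolved points, so I would reshape $\Sigma$ to wrap around it and approach it \emph{transversally} from within $K$; relative to this reshaped frontier the shadow points acquire short, unique, transversal minimizers and become regular for the core problem, letting Theorem~\ref{t:bnc-to-Euclidean} resume. Since $\clsr\Omega$ carries only finitely many smooth interfaces $\Gamma_i$, finitely many such transversal reshapings sweep the frontier across all of $\Omega$, and the remaining measure-zero set is filled by continuity along each $\Omega_j$. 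This yields $c_1=c_2$ everywhere. (When $\Omega_r$ is already dense, Corollary~\ref{c:harmonic-uniqueness} applies directly and no reshaping is required.)

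The hard part will be the advance step across a region $K$ in which $c$ is only \emph{piecewise} smooth. Unique continuation for the wave equation is standard for smooth coefficients, but $K$ generally contains interfaces, and one must continue the solution across each $\Gamma_i$; this demands either a unique-continuation result tolerating conormal jumps in the coefficient or a careful piecewise argument matching Cauchy data across each interface through the transmission relation behind Snell's law---and it is precisely the nowhere-demi-tangent hypothesis that excludes the glancing rays which would otherwise obstruct the matching. The remaining difficulty is geometric: one must verify that every shadow component can indeed be enclosed and approached transversally from the resolved region, and that the attendant broken-normal coordinates for $c_1$ and $c_2$ align on each fresh collar, so that the sweep both advances and terminates.
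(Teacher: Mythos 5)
Your strategy---layer stripping driven by unique continuation plus Theorem~\ref{t:bnc-to-Euclidean} applied to a redatumed domain---is exactly the paper's, but two of your concrete steps fail or are left genuinely open. First, in the advance step, the claim that ``smoothness of $c$ on each piece $\Omega_j$ extends the equality to a collar beyond $\Sigma$'' is false: two piecewise smooth speeds agreeing on a hypersurface need not agree on any open neighborhood of it. Equality on an open set must instead come from applying Theorem~\ref{t:bnc-to-Euclidean} at regular points \emph{strictly inside} the unresolved region (which are regular for the new domain because boundary normal coordinates always exist near a boundary), not from boundary determination followed by smooth extension. Second, your redatuming is incomplete: knowing that $u^1=u^2$ on $K$ for sources supported in $\Omega^\star$ does not by itself determine the outside measurement operator of the core problem, whose domain consists of \emph{arbitrary} Cauchy data supported in $\tilde\Omega^\star$, including data supported between $\Sigma$ and $\bdy\Omega$. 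One needs approximate controllability---density of the wavefields $R_Th_+ + R_{-T}h_-$ generated from $\Omega^\star$---which the paper establishes in Lemma~\ref{l:redatum} by showing $\ker G_T^*=0$ via unique continuation and harmonicity. (The unique continuation across interfaces that you flag as the hard part is imported from \cite[Lemma 2.9]{SC}, so it is available rather than a gap in principle.)

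The deeper structural gap is the sweep itself: you would need to prove that every shadow component is surrounded by resolved points, that the frontier can be reshaped to approach it transversally, and that the process terminates---and your finiteness claim (finitely many interfaces implies finitely many reshapings) does not follow from anything stated. The paper avoids the global flow entirely with a one-shot contradiction argument. Let $O=\{c\neq c'\}\setminus(\Gamma\cup\Gamma')$; if nonempty, pick $x\in\bdy O$ and a covector $\xi$ pointing out of $O$ and not tangent to $\Gamma$, run the geodesic $\gamma$ through $(x,\xi)$, set $y=\gamma(-\eps)$, $z=\gamma(\eps)$, $P=B_{2\eps}(y)$, and choose a Lipschitz $\tilde\Omega\supset P\cup O$ meeting $\bdy P$ only at $z$. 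By construction $\gamma$ is the unique, transversal distance-minimizer from each $\gamma(s)$, $s\in(-\eps,\eps)$, to $\bdy\tilde\Omega$, and Lemma~\ref{l:past-focal} rules out focal points, so these points are regular for both speeds; Lemma~\ref{l:redatum} transfers the data to $\tilde\Omega$, and Theorem~\ref{t:bnc-to-Euclidean} forces $c=c'$ on $\gamma((-\eps,\eps))$, contradicting $x\in\bdy O$. That single well-chosen domain touching a small geodesic ball at one point is the idea your sweep is missing: it manufactures regularity wherever disagreement would first occur and makes all the reshaping and termination issues moot.
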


The idea is as follows. With Theorem~\ref{t:bnc-to-Euclidean} we may not always be able to reconstruct $c$ everywhere, but we can always do so in a neighborhood of the boundary, where broken geodesic normal coordinates exist. We may then shrink the boundary inward, into the region where $c$ is now known, and by choosing the new boundary $\bdy\tilde\Omega$ suitably, reveal more regular points where we may reconstruct $c$ with Theorem~\ref{t:bnc-to-Euclidean}. By repeating this process, we can show that $c$ can be reconstructed everywhere. 

\begin{proof}
For the proof, we choose a point $x$ on the boundary of the domain where $c$ is known, and pick a new boundary $\bdy\tilde\Omega$ constructed to have a unique point closest to $x$, as well as to all points on a geodesic segment containing $x$. By Theorem~\ref{t:bnc-to-Euclidean}, $c$ can be then be reconstructed on this segment.

Suppose $c$, $c'$ are two piecewise smooth functions, bounded and bounded away from zero, equal outside $\Omega$ with singular supports $\Gamma$, $\Gamma'$ that are disjoint unions of smooth hypersurfaces. Let $\mathcal F$, $\mathcal F'$ be the corresponding outside measurement operators.

Assume $\mathcal F=\mathcal F'$, and let $O=\{c(x)\neq c'(x)\}\setminus (\Gamma\cup\Gamma')$, which is open in $\Omega$. We would like to show that $O$ is empty. Suppose the contrary and choose some $x\in\bdy O$.
Choose a covector $\xi$ that points out of $O$, and is not tangent to $\Gamma$ (if $x\in\Gamma$). Let $\gamma$ be the geodesic emanating from $(x,\xi)$; and choose $\eps>0$ such that $\gamma|_{[-\eps,\eps]}$ does not intersect $\Gamma$ except possibly at $x$. Set $y=\gamma(-\eps)$ and $z=\gamma(\eps)$, and let $P=B_{2\eps}(y)\owns z$.

Now, choose a Lipschitz subdomain $\tilde\Omega\supset P\cup O$ intersecting $\bdy P$ at $z$ only.
By construction, $\gamma$ is the unique distance-minimizing path from $y$ to $\bdy\tilde\Omega$, and it is smooth and transversal to $\Gamma$. It follows immediately that $\gamma$ is also the only distance-minimizing path from $\gamma(s)$ to $\bdy\tilde\Omega$ for $s\in(-\eps,\eps)$. 
Finally, Lemma~\ref{l:past-focal} implies that there are no focal points on $\gamma|_{(-\eps,\eps)}$, so every point in $\gamma((-\eps,\eps))$ is regular. The same argument holds for $c'$; so, shrinking $\epsilon$ if necessary, the points in $\gamma((-\eps,\eps))\setminus(\Gamma\cup\Gamma')$ are regular with respect to both $(\tilde\Omega,c)$ and $(\tilde\Omega,c')$.

By Lemma~\ref{l:redatum}, the outside measurement operators $\tilde{\mathcal F}$, $\tilde{\mathcal F}'$ for $c, c'$ are identical. Applying Theorem~\ref{t:bnc-to-Euclidean}, we find $c=c'$ on $\gamma((-\eps,\eps))$, a contradiction. 
\end{proof}

In the proof of Theorem~\ref{t:complete-uniqueness}, we used the fact that the outside measurement operator for a smaller domain $\tilde\Omega\subset\Omega$ can be found from $\mathcal F$ if we know the wave speed between $\bdy\tilde\Omega$ and $\Omega$. The following lemma provides the details.


\begin{lemma}
	Let $\tilde\Omega\subset\Omega$, and let $\tilde{\mathcal F}$, $\mathcal F$ be the corresponding outside measurement operators. Then $\tilde{\mathcal F}$ is uniquely determined by $\mathcal F$ and $c\big|_{\RR^n\setminus\tilde\Omega}$. 
	\label{l:redatum}
\end{lemma}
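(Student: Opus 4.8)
The plan is to prove the statement in its equivalent \emph{uniqueness} form: supposing two configurations $(\Omega,c)$ and $(\Omega,c')$ share the same outside measurement operator, $\mathcal F=\mathcal F'$, and agree off $\tilde\Omega$, i.e.\ $c=c'$ on $\RR^n\setminus\tilde\Omega$, I would show their restricted operators coincide, $\tilde{\mathcal F}=\tilde{\mathcal F}'$. Writing $A=\Omega\setminus\clsr{\tilde\Omega}$ for the known annulus, any Cauchy datum supported in $\tilde\Omega^\star$ splits by linearity into an \emph{exterior} part supported in $\Omega^\star$ and an \emph{annular} part supported in $A$, so it suffices to treat these two cases. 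For a datum $h_0$ I set $w=Fh_0-F'h_0$ and aim to show $w\equiv 0$ on $\tilde\Omega^\star$ for all time; note that in $\tilde\Omega^\star$ both $Fh_0$ and $F'h_0$ solve the wave equation with the \emph{same} (known) speed, so $w$ solves $(\partial_t^2-c^2\Delta)w=0$ there, and $w$ has vanishing Cauchy data at $t=0$ throughout $\tilde\Omega^\star$ since the two evolutions share the initial datum $h_0$.

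For the exterior case ($h_0$ supported in $\Omega^\star$), $\mathcal F=\mathcal F'$ gives $w=0$ on $\Omega^\star$ for all $t$. Hence $w$ has zero Cauchy data on the timelike cylinder $\bdy\Omega\times\RR$ and zero initial data in $A$; a unique continuation argument for the wave operator through the known region then forces $w\equiv 0$ on all of $A$, and therefore on $\tilde\Omega^\star$. The upshot is stronger than $\mathcal F$ alone: I obtain agreement of the two full wave fields throughout $\tilde\Omega^\star\supset A$, i.e.\ every exterior source produces the same field on the annulus under $c$ and $c'$.

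For the annular case ($h_0$ supported in $A$) I would exploit reciprocity. Because $-c^2\Delta$ is self-adjoint with respect to the weighted inner product $\int f\,\overline{g}\,c^{-2}\,dx$, both Cauchy propagators $\cos(t\sqrt{-c^2\Delta})$ and $(-c^2\Delta)^{-1/2}\sin(t\sqrt{-c^2\Delta})$ have symmetric Schwartz kernels in this weight; the same holds for $c'$, and since $c=c'$ on $A\cup\Omega^\star$ the weights match on the source and measurement regions. Reciprocity thus converts the statement ``annular source, exterior measurement'' into ``exterior source, annular measurement.'' The latter is exactly what the exterior case established (agreement of fields on $A$ for all exterior sources), so I conclude $w=0$ on $\Omega^\star$ for every annular source. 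Repeating the unique continuation step---now with $w=0$ on $\Omega^\star$ and zero initial data in $A$---propagates $w\equiv 0$ into $A$, completing the annular case and hence the lemma.

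The main obstacle is the unique continuation step: I must propagate the vanishing of $w$ across the timelike interface $\bdy\Omega$ and through $A$, where $c$ is only piecewise smooth. This requires a Tataru/Holmgren-type unique continuation theorem valid across the interfaces in $A$ (the waves there are purely transmitted, so this is reasonable, but it must be stated carefully), together with finite speed of propagation to control the relevant time interval. A secondary technical point is making the reciprocity rigorous at the level of the energy-space Cauchy-data solution operator---keeping track of the $c^{-2}$ weights so that the symmetry is genuinely independent of the unknown interior---and verifying the decomposition and well-posedness in the spaces $H^1_0(\tilde\Omega^\star)\oplus L^2(\tilde\Omega^\star)$.
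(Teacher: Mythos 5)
Your proof is correct in substance but takes a genuinely different route from the paper's. The paper proceeds via approximate controllability from the exterior: it defines $G_T(h_+,h_-)=R_Th_+\big|_{\tilde\Omega^\star}+R_{-T}h_-\big|_{\tilde\Omega^\star}$ for pairs of exterior Cauchy data, proves $\ker G_T^*=0$ using unitarity of $R_T$ in the energy inner product together with unique continuation (so that exterior-generated fields are dense in $H^1_0(\tilde\Omega^\star)\oplus L^2(\tilde\Omega^\star)$), and then, for arbitrary $h$ supported in $\tilde\Omega^\star$, approximates $h$ by such fields, compares the $c$- and $c'$-evolutions of the generating exterior data, and applies unique continuation once more to the differences $v_i$ to conclude $Fh=F'h$ outside $\tilde\Omega$ in the limit. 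Your reciprocity step replaces this controllability-plus-limit apparatus entirely: self-adjointness of $\cos\big(t\sqrt{-c^2\Delta}\big)$ and of the sine propagator on $L^2(c^{-2}dx)$ transfers your exterior-case conclusion (fields generated by exterior sources agree on all of $\tilde\Omega^\star$, by unique continuation) directly to sources supported inside $\tilde\Omega^\star$, with no density argument and no adjoint computation. The two proofs are cousins---the weighted-$L^2$ symmetry you invoke is the stationary counterpart of the energy-space unitarity the paper uses to compute $G_T^*$, and both rely twice on the same Tataru-type unique continuation across the interfaces, which the paper imports from~\cite[Lemma 2.9]{SC} and which you correctly flag as the main analytic input. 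What the paper's route buys is the controllability statement itself, of independent interest and parallel to the boundary-control redatuming literature~\cite{DKORedatum}; what yours buys is economy, avoiding the approximation argument altogether.

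One slip should be corrected: your opening decomposition is false as stated. With $A=\Omega\setminus\overline{\tilde\Omega}$, a datum in $H^1_0(\tilde\Omega^\star)$ whose trace on $\partial\Omega$ is nonzero cannot be written---or even approximated---as a sum of pieces supported in $\Omega^\star$ and in $A$, since the closure of $H^1_0(\Omega^\star)\oplus H^1_0(A)$ is the proper subspace of functions with vanishing trace on $\partial\Omega$. Fortunately the split is unnecessary: your reciprocity computation uses only that the source is supported in $\tilde\Omega^\star$, where the weight $c^{-2}$ is known and where the exterior case already gives agreement of the propagated fields from any exterior test datum. So you should run the ``annular'' argument verbatim for an arbitrary datum supported in $\tilde\Omega^\star$ and drop the case distinction; with that change the proof is complete.
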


In the boundary control setting, de Hoop, Kepley, and Oksanen consider the process of finding (an analogue of) $\tilde{\mathcal F}$ in much more detail, using the Neumann-to-Dirichlet map in place of the outside measurement operator $\mathcal F$~\cite{DKORedatum}. They also consider the problem's stability and give a concrete reconstruction procedure.

\begin{proof}
	The proof is a standard application of unique continuation. Choose $T>d_g(\bdy\Omega,\bdy\tilde\Omega)$, and consider the map
	\begin{nalign}
		G_T\colon (H^1_0(\Omega^\star)\oplus L^2(\Omega^\star))^2 &\to H^1_0(\tilde\Omega^\star)\oplus L^2(\tilde\Omega^\star)\\
					(h_+,h_-) & \mapsto R_Th_+\big|_{\tilde\Omega^\star} + R_{-T}h_-\big|_{\tilde\Omega^\star}.
	\end{nalign}
	As usual, $\Omega^\star=\RR^n\setminus\clsr\Omega$, and similarly for $\tilde\Omega^\star$.
	We would like to show approximate controllability: that is, the image of $G_T$ is dense in the codomain, or equivalently, $\ker G_T^*=0$. For any $h$ and $h_+,h_-$, by the unitarity of $R_T$,
	\begin{nalign}
		\form{G_T(h_+,h_-),h}_{\tilde\Omega^\star} &= \form{R_Th_+,h}_{\RR^n} + \form{R_{-T}h_-,h}_{\RR^n} \\
						&= \form{h_+,R_{-T}h}_{\RR^n} + \form{h_-,R_Th}_{\RR^n} \\
						&= \form{h_+,\pi^\star R_{-T}h}_{\Omega^\star} + \form{h_-,\pi^\star R_Th}_{\Omega^\star},
	\end{nalign}
	where $\norm\cdot_{X}$ is the energy inner product on $H^1_0(X)\oplus L^2(X)$. 	Hence $G_T^*h = (\pi^\star R_{-T}h, \, \pi^\star R_{T}h)$.
	
	Suppose now $h\in\ker G_T^*$, and consider the wavefield $Fh$ produced by $h$. Since $Fh(T,\cdot)$ and $Fh(-T,\cdot)$ are harmonic on $\Omega^\star$, we conclude $\d_t^2Fh(\pm T,\cdot)\big|_{\Omega^\star}=0$. By finite speed of propagation and unique continuation, $\d_t^2 Fh(0,\cdot)=0$ on the set $\Omega_T^\star=\set{x\in\Omega}{d(x,\bdy\Omega)<T}\supset\smash{\tilde\Omega}$~\cite[Lemma 2.9]{SC}. In particular, $h = Fh(0,\cdot)$ is harmonic on $\tilde\Omega^\star$, but since $h\in H^1_0$ this forces $h=0$. This proves injectivity of $G_T^*$, and hence approximate controllability of $G_T$.
	
	Now consider an arbitrary $h\in H^1_0(\tilde\Omega^\star)\oplus L^2(\tilde\Omega^\star)$; we must show $\tilde{\mathcal F}h=Fh|_{\tilde\Omega^\star}$ is determined by $\mathcal Fh=Fh|_{\Omega^\star}$ and $c|_{\tilde\Omega^\star}$. Accordingly, let $c'$ be another wave speed satisfying the conditions required of $c$; assume its outside measurement operator $\mathcal F'$ is identical to $\mathcal F$, and that $c=c'$ on $\tilde\Omega^\star$.
	
	Choose a sequence $h_i=G_T(h_{i,+}, h_{i,-})\to h$; let $u_i(t,x)=Fh_{i,+}(t+T,x)+Fh_{i,+}(t-T,x)$ be the associated wavefields with respect to $c$ and define $u_i'$, the wavefields with respect to $c'$, similarly. By continuity $u_i\to Fh$, $u_i'\to F'h$. The difference $v_i=u_i-u_i'$ is a $c$-wave equation solution outside of $\tilde\Omega$ and is zero outside $\Omega$ since $\mathcal F=\mathcal F'$. Hence, by unique continuation, $v_i=0$ outside $\tilde\Omega$, and that implies $Fh=F'h$ outside $\tilde\Omega$.
\end{proof}

\subsection{Geometric proofs} 			\label{s:bgnc-lemmas}

We conclude by proving several lemmas on broken geodesic normal coordinates from Section~\ref{s:bgnc}.

\begin{proof}[Proof of Lemma~\ref{l:minimal-is-normal}]
	To start, split $\gamma_x$ into pieces $\gamma_i$, each contained in a single domain $\Omega_{j(i)}$.
	Let $0=a_0<a_1<\dotsc<a_{r-1}<a_r=1$, where $\{a_1,\dotsc,a_{r-1}\} = \gamma_x^{-1}(\Gamma)$. Write $\gamma_i = \smash{\restr{\gamma}}_{[a_{i-1},a_i]}$, and let $\clsr\Omega_{j_i}$ be the subdomain containing $\gamma_i$.
	
	First, we show that $\gamma_x$ is a broken geodesic. Each $\gamma_i$ must be a geodesic for $(\Omega_{j(i)}, \smash{c_{j(i)}^{-2}}dx^2)$, and in particular $C^1$, for otherwise $\gamma$ could be shortened by replacing $\gamma_i$ with a distance-minimizing geodesic between $\gamma_i$'s endpoints. Snell's law holds at the interfaces as a direct consequence of the first variation formula for geodesics~\cite[Proposition 6.5]{LeeRM}. 
	
	Next, if there is a reflection ($j(i)=j(i+1)$), then $\gamma_i\cup\gamma_{i+1}$ is not the minimal-length path from $a_{i-1}$ to $a_{i+1}$ as the corner can be ``rounded''; see \cite[Theorem 6.6]{LeeRM}. 
Hence $\gamma_x$ contains only refractions. Finally, $\gamma_x'(1)=\gamma_r'(1)$ must be normal to $\bdy\Omega$, again by the first variation formula for geodesics.
	%
\end{proof}


\begin{proof}[Proof of Lemma~\ref{l:bgnc-smooth}]
	By definition, $\exp_{\bdy\Omega}$ is an injective local diffeomorphism on the interior of $\exp_{\bdy\Omega}^{-1}(\Omega_r)$, so it suffices to prove $\Omega_r$ is open.
	
	Suppose now $\Omega\setminus\Omega_r\supset \{x_j\}\to x\in\Omega$, and choose minimal-length paths $\gamma_j\colon [0,1]\to\clsr\Omega$ from each $x_j$ to $\bdy\Omega$. Using Arzel\`a-Ascoli, we may assume, by taking a subsequence, that $\gamma_j\to\gamma$ uniformly. Letting $l(\eta)=\int c(\eta(s))^{-2}\abs{\eta'(s)}\,ds$ be the length of $\eta\in\AC(\clsr\Omega)$, define $p_j=\gamma_i(1)\in\bdy\Omega$, $T_j=\ell(\gamma_i)$, and similarly $p=\gamma(1)$, $T=\ell(\gamma)$.
	
	There are four cases, depending on how the $x_j$ are irregular:
	\begin{itemize}
		\item Infinitely many $x_j\in\Gamma$: then $x\in\Gamma$, by closedness of $\Gamma$.
				
		\item For infinitely many $j$ there exist distinct minimal-length paths $\tilde\gamma_j\neq\gamma_j$ from $x_j$ to $\bdy\Omega$: As before, using Arzel\`a-Ascoli and passing to a subsequence, $\tilde\gamma_j$ converges to some minimal-length path $\tilde\gamma$ from $x$ to $\bdy\Omega$. If the minimal paths $\gamma,\tilde\gamma$ are distinct, then $x$ is irregular.
		
		Otherwise, let $p_j=\gamma_i(1)\in\bdy\Omega$, $T_j=\ell(\gamma_i)$, and similarly define $\tilde p_j$, $\tilde T_j$. The fact that $\lim (p_j,T_j)=\lim(\tilde p_j,\tilde T_j)$ and $\exp_{\bdy\Omega}(p_j,T_j)=\exp_{\bdy\Omega}(\tilde p_j,\tilde T_j)$ while $(p_j,T_j)\neq(\tilde p_j,\tilde T_j)$ implies that $d\exp_{\bdy\Omega}$ is singular at $\lim (p_j,T_j)$. Hence $x$ is irregular.
		
		\item Infinitely many $\gamma_j$ are demi-tangent to $\Gamma\cup\bdy\Omega$: First, assume infinitely many left-hand side derivatives $\gamma'^-$ are tangent to $\Gamma$. Passing to a subsequence, assume this is true for all $\gamma_j$. Let $s_j = \inf\set{s}{\gamma'^-_j(s)\in T\Gamma}$; by compactness the $s_j^*$ have a limit point $s^*\in(0,1]$. Again passing to a subsequence, we may assume $s_j^*\to s^*$, and $\gamma'(s_j^*)$ converges to some $\nu\in T\Gamma$. The proof of Lemma~\ref{l:minimal-is-normal} implies that each $\gamma_j$ is a normal transmitted geodesic on $[0,s_j^*]$, and so by the geodesic equation is $C^2$-bounded on $\Omega\setminus\Gamma$, the bounds depending on the $C^1(\Omega\setminus\Gamma)$ norm of $c$ in some neighborhood of $\gamma([0,s^*])\setminus\Gamma$.
		
		Consider $\gamma(s^*-\eps)$ for $\eps>0$. If $\gamma(s^*-\eps)$ is outside $\Gamma$, then so is $\gamma_j(s_j^*-\eps)$ for sufficiently large $j$. Write $\gamma_j(s_j^*-\eps) = \gamma_j(s_j^*)-\eps\gamma'_j(s_j^*)+\eps^2 R_j(\eps)$. Since $\gamma_j$ is a geodesic near $s_j^*-\eps$, the remarks above imply $R_j(\eps)$ is smooth and uniformly bounded in $j$. Taking limits, we have $\gamma(s^*-\eps) = \gamma(s^*)-\eps\nu + O(\eps^2)$, with a locally bounded remainder term. If $\gamma^{-1}(\Omega\setminus\Gamma)$ is dense in a neighborhood of $s^*$, this implies $\gamma'(s^*)$ exists and equals $\nu\in T\Gamma$. If not, then by continuity $\gamma\in\Gamma$ near $s^*$; hence $\gamma'$ exists and lies in $T\Gamma$ for some $s$ near $s^*$. Either way, $x$ is irregular.%
		\footnote{The argument here covers the possibility of interfaces that are smooth but extremely oscillatory.}
		
		Finally, if infinitely many right-hand side derivatives $\gamma'^+$ are tangent to $\Gamma$, a similar argument applies, flipping signs and replacing $s_j$ by the supremum $\sup\set{s}{\gamma'^+_j(s)\in T\Gamma}$.
		
		\item Lastly, if $d\exp_{\bdy\Omega}(p(x_j),T(x_j))$ is singular for infinitely many $j$, the same is true at $p(x), T(x)$ by continuity.
	\end{itemize} 
		
	It is clear that $\Omega_r$ is open in $\RR^n$, not just in $\Omega$, since boundary normal coordinates are always smooth and well-defined in a sufficiently small neighborhood of $\bdy\Omega$, and therefore none of the conditions for regularity can fail near $\bdy\Omega$.
\end{proof}

%
%
%

\begin{proof}[Proof of Lemma~\ref{l:convex-ae-regular}]
	Let $x\in\Omega$, and let $\gamma_x\in AC(\clsr\Omega)$ be a minimal-length path from $x$ to $\bdy\Omega$. 
		
	Suppose first $\gamma$ intersects $\Gamma$ at infinitely many points. Then, by continuity, $\gamma^{-1}(\Gamma)$ contains some closed interval $[a,b]$. However, by strict convexity the minimal-length path from $\gamma(a)$ to $\gamma(b)$ cannot be contained in any intersection $\rho^{-1}(\gamma(a))$, a contradiction. 

	Therefore, $\gamma$ must intersect $\Gamma$ at only finitely many points, and between these intersections it must be a geodesic, just as in Lemma~\ref{l:minimal-is-normal}. By strict convexity, $\gamma$ may intersect $\Gamma$ tangentially at most once. If it does so, say in component $\Gamma_i$, then $\gamma$ does not enter the domain $\Omega_i$ bounded by $\Gamma_i$.
	
	Our goal now is to show that among the set of boundary normal transmitted geodesics (those issued form $N^*\bdy\Omega$), almost none glance from $\Gamma$. Let $N\subset T^*\Omega$, the set of normal geodesic covectors,  be the flowout of $N^*\bdy\Omega$ by $\Phi$. Similarly, let $\mathcal G\subset T^*\Omega$, the set of eventually glancing covectors, be the flowout of $T^*\Gamma$ by the continuous extension of $\Phi$. 
	
	We now show $N\setminus \mathcal G$ is dense in $N$ by checking $T_{y,\eta}N\not\subset T_{y,\eta}\mathcal G$ at any intersection $(y,\eta)\in T^*(\Omega\setminus\Gamma)$. The idea is that every glancing normal transmitted geodesic can be perturbed downward to a non-glancing normal geodesic. Let $\gamma$ be the (normal) transmitted geodesic through $(y,\eta)$, and let $\{z\}=\gamma\cap\Gamma$; say $z=\Phi_t(y)$, for some $t\in\RR$.
	
	If $\nu$ is the inward pointing normal to the component $\bdy\Omega_i=\Gamma_i$ of $\Gamma$ at $z$, consider the points $z_\eps=z+\eps\nu$. For each $z_\eps$ there is a $\zeta_\eps$ such that $(z_\eps,\zeta_\eps)\in N$. Hence $\alpha=d\Phi_t(\nu, d\zeta_\eps/d\eps\big|_{\eps=0})\in T_{y,\eta} N$. However, $\alpha$ cannot belong to $T\mathcal G$, for if it did, we would have a perturbation $(z'_\eps,\zeta'_\eps)=(z_\eps,\zeta_\eps)+O(\eps^2)$ such that each of the transmitted bicharacteristics $\bgamma'_\eps$ through $(z'_\eps,\zeta'_\eps)$ glance from $\Gamma$. But for small enough $\eps$, this is impossible: $\bgamma'_\eps$ cannot glance from interfaces outside $\clsr\Omega_i$, because $\bgamma$ does not; it cannot glance from $\Gamma_i$ by convexity, because it intersects $\Omega_i$; and it cannot intersect interfaces inside $\Omega_i$, because they are a finite distance from $z$. 
	
	Therefore, a dense subset of points $x\in\Omega$ have minimal paths to the boundary not glancing from $\Gamma$. Next, we ensure that not too many of these are focal points or have \emph{multiple} minimal paths.
	
	For this, suppose $\gamma$ is a minimal-length admissible path from $x\in\Omega$ to $\bdy\Omega$, with $\gamma(0)=x$ and $\gamma(1)\in\bdy\Omega$. Then we can check that all of $\gamma((0,1))$ is almost regular. For suppose some $\gamma(s)$, $s\in(0,1)$, had another minimal admissible path to $\bdy\Omega$ besides $\gamma$, say $\eta$. Then the union $\eta_0=\gamma|_{[0,s]}\cup\eta$ would also be minimal-length, and therefore must be a purely transmitted geodesic, recalling the proof of Lemma~\ref{l:minimal-is-normal}. But this is impossible, for if $\gamma(s)\notin\Gamma$, then $\eta_0$ has a corner at $\gamma(s)$, while if $\gamma(s)\in\Gamma$, then $\eta_0$ cannot satisfy Snell's law at $\gamma(s)$. In particular, $x$ is the limit of a sequence of almost regular points, namely $\lim_{s\to 0}\gamma(s)$. Finally, by Lemma~\ref{l:past-focal}, broken normal geodesics do not minimize distance to the boundary past a focal point, so in fact $\gamma((0,1))\subset\Omega_r$, completing the proof.
\end{proof}

\begin{lemma}
	Let $\gamma(s)=\exp_{\bdy\Omega}(p,s)$ be a broken normal geodesic, where $p\in\bdy\Omega$. If $\gamma(s_0)$ is a focal point and $\gamma(s_0)\notin\Gamma$, then $\gamma$ does not minimize beyond $s_0$. That is, $\gamma(s')<s'$ for $s'>s_0$.
	\label{l:past-focal}
\end{lemma}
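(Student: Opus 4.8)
The plan is to reproduce, in the broken (refracted) setting, the classical Riemannian fact that an $N$-geodesic ceases to minimize distance to a submanifold $N$ past a focal point, here with $N=\bdy\Omega$. The inequality $T(\gamma(s'))<s'$ for $s'>s_0$ is precisely the failure of $\gamma|_{[0,s']}$ to minimize the distance from $\gamma(s')$ to $\bdy\Omega$. Throughout I would use the hypothesis $\gamma(s_0)\notin\Gamma$, so that the segment of $\gamma$ near $s_0$ is an honest smooth geodesic and every refraction of $\gamma$ occurs at a parameter bounded away from $s_0$.

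First I would characterize the focal condition infinitesimally. By construction, $d\exp_{\bdy\Omega}$ at $(p,s_0)$ is the time-$s_0$ solution operator of the linearized broken-geodesic (Jacobi) equation with $\bdy\Omega$-initial data, so its singularity is equivalent to the existence of a nontrivial \emph{broken normal Jacobi field} $J$ on $[0,s_0]$: a field solving the Jacobi equation on each smooth subsegment, satisfying at $s=0$ the $\bdy\Omega$-conditions ($J(0)\in T_p\bdy\Omega$, with normal part of $J'(0)$ fixed by the second fundamental form of $\bdy\Omega$), satisfying the linearized Snell's law jump conditions at each interface crossing, and with $J(s_0)=0$. Because $\gamma(s_0)\notin\Gamma$, the field $J$ is a genuine smooth Jacobi field near $s_0$, so $J(s_0)=0$ together with $J\not\equiv 0$ forces $J'(s_0^-)\neq 0$.

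Next I would set up the second-variation (index) form $\mathcal I_{s'}$ on piecewise-smooth admissible variation fields along $\gamma|_{[0,s']}$ — those tangent to $\bdy\Omega$ at $s=0$, vanishing at $s'$, and compatible with the interface structure. Relative to the smooth case, $\mathcal I_{s'}$ acquires a boundary term from the second fundamental form of $\bdy\Omega$ at $s=0$ and an interface term (from linearized Snell's law) at each crossing; the computation is the usual first/second variation of length, the first variation vanishing because $\gamma$ meets $\bdy\Omega$ normally and satisfies Snell's law (as in Lemma~\ref{l:minimal-is-normal}). Extending $J$ by zero past $s_0$ gives an admissible field $W$ with $\mathcal I_{s'}(W,W)=0$. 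Since $W$ has a corner at $s_0$ (where $J'(s_0^-)\neq 0$ but $W\equiv 0$ afterwards), $W$ is \emph{not} a broken Jacobi field on all of $[0,s']$; the standard index-lemma argument then furnishes an admissible field $\tilde W=W+\epsilon V$ with $V(s_0)$ parallel to $J'(s_0^-)$, for which the surviving corner term makes $\mathcal I_{s'}(W,V)\neq 0$ and hence $\mathcal I_{s'}(\tilde W,\tilde W)<0$ for a suitable small $\epsilon$.

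Finally I would realize $\tilde W$ as the variation field of a family $\gamma_\tau$ of admissible paths from $\bdy\Omega$ to the fixed endpoint $\gamma(s')$, with $\gamma_0=\gamma$; for small $\tau$ these remain transmitted broken geodesics crossing the same interfaces transversally, since each crossing of $\gamma$ is transversal and the focal point lies off $\Gamma$. The first variation of length vanishes and the second variation equals $\mathcal I_{s'}(\tilde W,\tilde W)<0$, so $L(\gamma_\tau)<s'$ for small $\tau\neq 0$, whence $T(\gamma(s'))=d(\gamma(s'),\bdy\Omega)<s'$. The main obstacle is the bookkeeping in Step three: extracting the correct interface contributions (the linearized Snell's law terms at each $\Gamma_i$) and the correct $\bdy\Omega$ boundary term (the second fundamental form) so that $\mathcal I_{s'}$ is genuinely the Hessian of length along admissible $N$-variations. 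Once that is in place the corner-rounding step is exactly the classical one, and the only remaining check is admissibility of the competitor paths, which holds by transversality for small $\tau$.
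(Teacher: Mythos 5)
Your proposal is correct in outline but takes a genuinely different route from the paper. You rebuild the classical second-variation machinery directly in the broken setting: characterize the focal point via a nontrivial broken normal Jacobi field (with linearized Snell jump conditions at each crossing and the second-fundamental-form condition at $\bdy\Omega$) vanishing at $s_0$, extend it by zero, and run the standard corner-smoothing index argument, using $\gamma(s_0)\notin\Gamma$ to place the corner at a smooth point. The paper instead reduces to the known smooth-case theorem by re-datuming the reference hypersurface: it first notes that focal points are discrete along broken normal geodesics (because the broken geodesic flow is a canonical graph satisfying the same twist condition as the smooth flow), picks an interval $[s_1,s_2]\owns s_0$ avoiding $\Gamma$ on which $\gamma(s_0)$ is the only focal point, observes that the distance level set $\bdy\Omega_{s_1}$ is a smooth hypersurface near the non-focal point $\gamma(s_1)$, applies the classical Riemannian result (Klingenberg) to $\gamma|_{[s_1,s_2]}$ viewed as a normal geodesic of $\bdy\Omega_{s_1}$ in a region where $c$ is smooth, and then propagates $d(\bdy\Omega,\gamma(s))<s$ from $(s_0,s_2]$ to all $s>s_0$ by the triangle inequality. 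What each approach buys: yours, if completed, is self-contained and yields non-minimization for every $s'>s_0$ in one stroke, without needing discreteness of focal points; but it hinges on exactly the bookkeeping you flag as the main obstacle --- verifying that the interface terms in the second variation cancel against the linearized Snell conditions so that $\mathcal I_{s_0}(J,J)=0$ for broken Jacobi fields, that $\mathcal I$ really is the Hessian of length on the constrained class of variations with corners sliding along $\Gamma$, and that the competitor fields are realized by admissible broken curves --- none of which you carry out, and which constitutes the bulk of the work. The paper's reduction sidesteps all of this at the modest cost of the twist-condition discreteness argument and the smoothness of the level hypersurface near a non-focal point; if you intend your route, the interface second-variation formula must actually be proved, not just described, since it is where the broken structure genuinely enters.
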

\begin{proof}
	The lemma will be proved by reducing to the smooth case, where the result is well known; e.g.~\cite[\textsection{}1.12 and (2.5.15)]{Klingenberg}. When $c$ is smooth, focal points are discrete along normal geodesics, which follows from the symplectic property of the geodesic flow as well as a twist condition (see~\cite[prop.~2.11]{PaternainGeodesics}). Since the broken geodesic flow for fixed time parameter is also described by a canonical graph, and satisfies the same twist condition, essentially the same proof shows that focal points are also discrete along broken normal geodesics.
	
	Choose then an interval $[s_1,s_2]\owns s_0$ on which $\gamma(s_0)$ is the sole focal point, and such that $\gamma\big|_{[s_1,s_2]}$ does not intersect $\Gamma$. Let $\Omega_{s_1}=\set{x\in\Omega}{d(x,\bdy\Omega)\ge s_1}$; since $\gamma(s_1)$ is not a focal point, $\bdy\Omega_{s_1}$ is a smooth hypersurface near $\gamma(s_1)$. Now $\gamma\big|_{[s_1,s_2]}$ is a normal geodesic having a focal point at $\gamma(s_0)$, with respect to $\bdy\Omega_{s_1}$. The result for the smooth case implies $\gamma$ is not minimizing (w.r.t. $\bdy\Omega_{s_1}$) past $s_0$. That is, $d(\bdy\Omega_{s_1},\gamma(s))<s-s_1$ for $s\in(s_0,s_2]$. Because $d(x,\bdy\Omega)=s_1$ for every point $x\in\bdy\Omega_{s_1}$, this implies $d(\bdy\Omega,\gamma(s))<s$ for $s\in(s_0,s_2]$. It follows immediately that $d(\bdy\Omega,\gamma(s))<s$ for all $s>s_0$, completing the proof.
\end{proof}

\section{Asymptotic Analysis}      \label{s:asymptotic}

In this section and the next, we prove a complementary result on locating the discontinuities in $c$ in boundary normal coordinates. In geophysics, this is akin to a time migration, with multiple scattering completely suppressed. Our basic procedure involves sending a wave packet into $\Omega$ and tracking its energy as it proceeds; at each discontinuity in $c$ energy will be lost to the reflected wave, which we can measure with scattering control. As before, we restrict our attention to the wave equation~\ref{e:wave-eqn}; however, the argument is expected to generalize to arbitrary scalar wave equations.

In preparation, we begin in Sections~\ref{s:wave-packets} and~\ref{s:ps-recovery} by studying how the energy of a wave packet is transformed by a graph FIO. Wave packets and wave packet frames have a long history in microlocal analysis, starting with C\'ordoba-Fefferman~\cite{CF}; see for example~\cite{S1,S2,CD,GT,ST,DUVW}. Our rather loose definition is inspired by Smith~\cite{S2}. As further preparation, we then recall in Section~\ref{s:wave-ml} the well-known decomposition of the wave equation parametrix into components involving reflections and refractions, when the wave speed is discontinuous. We conclude in Section~\ref{s:interface-recovery} with the main result.

\subsection{Wave packets and propagation of singularities}  \label{s:wave-packets}

Let $\phi$ be a Schwartz function (the \emph{standard wave packet}) satisfying
\begin{itemize}
	\item $\supp\hat\phi\subset\{1 < \xi_1\}$;
	\item $\supp\hat\phi$ compact;
	\item $\norm\phi_{L^2}=1$.
\end{itemize}
We then introduce parabolic dilates of $\phi$, given by a scale factor $\lambda$:
\begin{equation}
	\phi_{\lambda} = \lambda^{(n+1)/4}\phi(\lambda x_1, \sqrt\lambda x_2,\dotsc,\sqrt\lambda x_n).
	\label{e:parabolic-dilates}
\end{equation}
The leading power of $\lambda$ ensures that $\norm{\phi_\lambda}_{L^2}=1$.
Finally, we introduce translations and rotations as follows. For $(x,\xi)\in S^*\RR^n$, let $\phi_{\lambda,x,\xi}=\phi_\lambda\circ M_{x,\xi}$, where $M_{x,\xi}$ is a rigid motion such that $dM_{x,\xi}^*(0,e_1)=(x,\xi)$, where $e_1=(1,0,\dotsc,0)$.
The result $\phi_{\lambda,x,\xi}$ is a wave packet of frequency $\lambda$ centered at $(x,\xi)$. For brevity, we accumulate the indices into a single index $\mu=(\lambda,x,\xi)$.

Next, we describe the frequency and spatial concentration of $\phi_\mu$.
Define $\Xi_{\mu}=\cone(\supp\hat\phi_\mu)\subset\RR^n$, where $\cone(Y)=\bigcup_{a\in\RR^+}\!aY$ is the smallest conic set containing $Y$.
On the spatial side, choose neighborhoods $U_{\lambda}\owns 0$ satisfying as $\lambda\to\infty$
\begin{itemize}
	\item $\diam U_{\lambda}\to 0$;
	\item $\int_{U_{\lambda}} \abs*{\phi_{\lambda}}^2\,dx \to 1$.
\end{itemize}
Such $U_\lambda$ exist, since by~\eqref{e:parabolic-dilates} $\phi_\lambda$ becomes increasingly concentrated near the origin as $\lambda\to\infty$; we may take $U_{\lambda}=B_r(\lambda)(0)$ with radius $r(\lambda)\sim \lambda^{-1/2+\eps}$, for instance.

Next, define slightly larger sets $U'_\lambda$ satisfying the same conditions, with $\clsr U_\lambda\subset U'_\lambda$. 
Set $U_{\lambda,x,\xi}=M_{x,\xi}^{-1}(U_{\lambda,0,e_1})$, and similarly for $U'_{\lambda,x,\xi}$, and choose cutoffs $\rho_\mu$ satisfying
\begin{align}
\rho_\mu(x)&=\when{	1, & x\in \clsr U_\mu,\\
				0, & x\notin U'_\mu.}
\end{align}

This completes the construction.
Intuitively speaking, a graph FIO maps wave packets to wave packets, preserving microlocal concentration~\cite{S1,S2}. Here, we only need the fact that an FIO preserves a wave packet's spatial concentration, as expressed in the following lemma.

\begin{lemma} \label{l:post-fio-concentration}
	Let $T$ be a graph FIO of order zero with associated symplectomorphism $\chi$. Let $(x_0,\xi_0)\in S^*\Theta$, and $(y_0,\eta_0)=\chi(x_0,\xi_0)$. Then for any neighborhood $V\owns y_0$,
	\begin{equation}
		\norm*{T\rho_{\lambda,x_0,\xi_0}\phi_{\lambda,x_0,\xi_0}}_{L^2(\RR^n\setminus V)} \to 0
		\qquad
		\text{ as $\lambda\to\infty$.}
	\end{equation}
\end{lemma}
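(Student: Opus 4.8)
The plan is to reduce the statement to a spatially cut-off estimate, and then to split the wave packet into a piece microlocalized at $(x_0,\xi_0)$ and a complementary piece, handling the two by a smoothing argument and by $L^2$-boundedness of an order-zero FIO respectively. Write $\mu=(\lambda,x_0,\xi_0)$. I would first fix a slightly smaller neighborhood $V'\owns y_0$ with $\clsr{V'}\subset V$, and choose $\psi\in C_c^\infty(\RR^n)$ with $\psi\equiv 1$ on $\RR^n\setminus V$ and $\supp\psi\subset\RR^n\setminus V'$. Since $\psi\equiv 1$ off $V$, we have $\norm{T\rho_\mu\phi_\mu}_{L^2(\RR^n\setminus V)}\le\norm{\psi T\rho_\mu\phi_\mu}_{L^2(\RR^n)}$, so it suffices to show the right-hand side tends to $0$.

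Next I would choose a properly supported $\PsiDO$ $A$ of order $0$, microlocally equal to the identity near $(x_0,\xi_0)$, with conic microsupport small enough that $\chi$ carries it into covectors whose base points lie in $V'$; this is possible because $\chi$ is continuous and $\chi(x_0,\xi_0)=(y_0,\eta_0)$ with $y_0\in V'$. Then I decompose
\[
	\psi T\rho_\mu\phi_\mu = \psi TA\,\rho_\mu\phi_\mu + \psi T(1-A)\,\rho_\mu\phi_\mu .
\]
For the first term, the canonical relation of $\psi TA$ is empty, since it would require a covector in the microsupport of $A$ whose $\chi$-image has base point in both $V'$ and $\supp\psi$, and these are disjoint. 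Hence $\psi TA$ is smoothing, with smooth compactly supported Schwartz kernel, and therefore bounded $H^{-N}\to L^2$ for every $N$. Because $\phi_\mu$ carries frequencies of size $\lambda$ — and the smooth cutoff $\rho_\mu$, whose derivatives grow only like $\lambda^{1/2-\eps}\ll\lambda$, spreads these frequencies negligibly with rapidly decaying tails — one checks $\norm{\rho_\mu\phi_\mu}_{H^{-N}}\lesssim\lambda^{-N}$, so this term is $O(\lambda^{-N})$ for every $N$.

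For the second term, $\psi T$ is a graph FIO of order zero, hence bounded on $L^2$, so it remains to show $\norm{(1-A)\rho_\mu\phi_\mu}_{L^2}\to 0$; this is the crux. Here $1-A$ is microlocally trivial near $(x_0,\xi_0)$, whereas the wave packet concentrates at that single phase-space point: by the parabolic scaling~\eqref{e:parabolic-dilates}, $\rho_\mu\phi_\mu$ is supported within $O(\lambda^{-1/2+\eps})$ of $x_0$, and its Fourier support clusters, up to rapidly decaying tails, in the cone $\Xi_\mu$, whose angular width about $\xi_0$ shrinks like $\lambda^{-1/2}$. For large $\lambda$ this essential phase-space support lies where the symbol of $1-A$ vanishes, and a non-stationary phase / symbol-calculus estimate then yields $\norm{(1-A)\rho_\mu\phi_\mu}_{L^2}=O(\lambda^{-\infty})$; alternatively one may invoke the coherent-state limit $\form{Q\phi_\mu,\phi_\mu}\to q_0(x_0,\xi_0)$ for order-zero $\PsiDO$s $Q$ with principal symbol $q_0$, applied to $Q=(1-A)^*(1-A)$ (whose principal symbol vanishes at $(x_0,\xi_0)$), after discarding the harmless error $\norm{(\rho_\mu-1)\phi_\mu}_{L^2}\to 0$. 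I expect this concentration estimate — making precise that a wave packet is annihilated to infinite order by a $\PsiDO$ vanishing microlocally at its center — to be the main technical obstacle; once it is in hand, combining the two terms completes the proof.
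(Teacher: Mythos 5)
Your proposal is correct and follows the same skeleton as the paper's proof: cut off in space away from $V$ (the paper's $1-\sigma_y$ plays the role of your $\psi$), insert a \PsiDO{} cutoff microsupported near $(x_0,\xi_0)$ inside $\chi^{-1}(V'\times\RR^n)$ (the paper's $\sigma_x$ is your $A$), split $(1-\sigma_y)T=K+L$ exactly as you do, and estimate the smoothing piece via $H^{-s}\to L^2$ boundedness together with $\norm{\phi_\mu}_{H^{-s}}\lesssim\lambda^{-s}$, which uses $\abs\xi>\lambda/2$ on $\supp\hat\phi_\mu$. The one genuine divergence is the term you call the crux: the paper needs no nonstationary-phase or coherent-state estimate there, because $\hat\phi_\mu$ has \emph{compact} support contained in the cone $\Xi_\mu$, which collapses onto the direction $\xi_0$ as $\lambda\to\infty$; so for large $\lambda$ the cutoff symbol is identically $1$ on $\supp\hat\phi_\mu$ and $L\phi_\mu=0$ exactly, with no asymptotic analysis. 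The paper also discards $\rho_\mu$ at the outset, using $\norm{(1-\rho_\mu)\phi_\mu}\to 0$ and $L^2$-boundedness of $T$, which spares the frequency-spreading discussion you need to keep $\rho_\mu$ inside the $H^{-N}$ estimate; you in fact use this same discarding trick in your alternative route, so you could streamline by applying it from the start. That said, your $O(\lambda^{-\infty})$ concentration estimate is sound and is what rigorously underwrites the exact-vanishing claim when the cutoff symbol genuinely depends on $x$ (the paper's one-line assertion that $[1-\alpha(x,\xi)]\hat\phi_\mu(\xi)$ vanishes identically is literal only where $\alpha(\cdot,\xi)\equiv 1$ in $x$ as well), and your sharp \Garding{}/coherent-state fallback is precisely the device the paper deploys one subsection later in the proof of Proposition~\ref{p:principal-symbol-recovery}; so your extra work is not wasted, just not required for this lemma. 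Two small repairs: $\psi$ cannot be both compactly supported and $\equiv 1$ on the unbounded set $\RR^n\setminus V$, so take $\psi=1-\sigma_y$ with $\sigma_y\in C_c^\infty$ supported in $V$ and $\equiv 1$ on $V'$; and the claim that $\psi TA$ has a \emph{compactly supported} smooth kernel is not guaranteed --- what you actually use, and all you need, is that an FIO associated with the graph of $\chi$ whose amplitude vanishes near the graph is smoothing and hence bounded $H^{-N}\to L^2$ for every $N$, which is how the paper phrases it.
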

\begin{proof}
	We start by cutting off $T$ near $(x_0,\xi_0)$ and away from $y$. Choose a smooth cutoff $\sigma_y$ supported in $V$ and equal to 1 on a smaller neighborhood $V'\owns y_0$. Let $W=\chi^{-1}(V'\times \RR^n)$, and pick $W'$ with $(x_0,\xi_0)\in W'\subset\clsr{W'}\subset W$. Let $\alpha(x,\xi)$ be a smooth conic cutoff supported in $W$ and equal to one on $\clsr{W'}$, and $\sigma_xu = (2\pi)^{-n} \int e^{ix\cdot\xi}\alpha(x,\xi)\hat u(\xi)\,d\xi$ the associated pseudodifferential cutoff.
		

	For the lemma, it suffices to show $\norm{(1-\sigma_y)T\rho_\mu\phi_\mu}\to 0$. Actually, since $\abs{(1-\rho_\mu)\phi_\mu}\to 0$ as $\lambda\to\infty$, by $L^2$ boundedness of $T$ it is enough to show $\norm{(1-\sigma_y)T\phi_\mu}\to 0$. For this we split $(1-\sigma_y)T$:
\begin{align}
	(1-\sigma_y)T &= K + L,
	&
	K &= (1-\sigma_y)T\sigma_x,
	&
	L &= (1-\sigma_y)T(1-\sigma_x).
\end{align}
By definition, $L\phi_\mu=0$, since $[1-\alpha(x,\xi)]\hat\phi_\mu(\xi)$ is identically zero.
By construction, $K$ is smoothing, since its amplitude is zero on the graph of $\chi$. In particular, $K$ is continuous from $H^{-s}\to L^2$ for any $s$, so
\begin{nalign}
	\norm*{K\phi_\mu}_{L^2}^2 	&\lesssim \norm*{\phi_\mu}_{H^{-s}}^2 \\
							&= \frac{1}{(2\pi)^n}\int \form\xi^{-2s} \abs*{\hat\phi_\mu(\xi)}^2\,d\xi\\
							&\leq \form{\lambda}^{-2s} \frac{1}{(2\pi)^n}\int \abs*{\hat\phi_\mu(\xi)}^2\,d\xi\\
							&\lesssim \lambda^{-2s},
\end{nalign}
using the fact that $\abs\xi>\lambda/2$ on $\supp\hat\phi_\mu$. This completes the proof.
\end{proof}

\subsection{Recovery of principal symbols}  \label{s:ps-recovery}

With the framework laid in the previous subsection, we now show a graph FIO scales the $L^2$ norm of a wave packet by the principal symbol, to leading order.

\begin{proposition}\label{p:principal-symbol-recovery}
	Let $T$ be a graph FIO of order zero with associated symplectomorphism $\chi$, with principal symbol $p$. Let $(x_0,\xi_0)\in S^*\Theta$, and $(y_0,\eta_0)=\chi(x_0,\xi_0)$. Then for any neighborhood $V$ of $(y_0,\eta_0)$,
	\begin{equation}
		\norm*{T\rho_{\lambda,x,\xi}\phi_{\lambda,x,\xi}}^2_{L^2(V)} \to \abs*{p(x_0,\xi_0)}^2
		\qquad
		\text{ as $\lambda\to\infty$.}
	\end{equation}
\end{proposition}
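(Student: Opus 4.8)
The plan is to recognize $T^*T$ as a pseudodifferential operator and reduce the claim to a \emph{coherent-state} limit for \PsiDO{}s acting on the parabolic wave packets $\phi_\mu$; here I abbreviate $\mu=(\lambda,x_0,\xi_0)$, so that $\phi_\mu$ is centered at $(x_0,\xi_0)$. First I would remove the localization to $V$ and the cutoff $\rho_\mu$. Since $(1-\rho_\mu)\phi_\mu\to 0$ in $L^2$ and $T$ is $L^2$-bounded, $T\rho_\mu\phi_\mu$ and $T\phi_\mu$ share the same limiting norm, so $\rho_\mu$ may be dropped. For the microlocalization to $V$ (read as $\norm{\Psi_V\,\cdot\,}_{L^2}$ for a microlocal cutoff $\Psi_V$ supported in $V$ and equal to $1$ near $(y_0,\eta_0)$), recall that $\phi_\mu$ concentrates at $(x_0,\xi_0)$ in phase space: spatially near $x_0$, and, because the cone $\Xi_\mu$ narrows to the $\xi_0$-direction with $\abs\xi\gtrsim\lambda$, also in frequency. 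As $T$ is a graph FIO with symplectomorphism $\chi$, its image $T\phi_\mu$ concentrates at $(y_0,\eta_0)=\chi(x_0,\xi_0)$: the spatial half is exactly Lemma~\ref{l:post-fio-concentration}, and the frequency half follows by the same argument, the frequency support of $T\phi_\mu$ being governed by $\chi(\Xi_\mu)$, a narrowing cone about $\eta_0$. Hence $\norm*{(1-\Psi_V)T\phi_\mu}_{L^2}\to 0$, and it suffices to compute $\lim_\lambda\norm{T\phi_\mu}^2_{L^2(\RR^n)}$.

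By the transversal composition calculus, $T^*T$ is an order-zero \PsiDO: its canonical relation is the graph of $\chi^{-1}\circ\chi=\mathrm{id}$, i.e.\ the diagonal, and its principal symbol at $(x,\xi)$ is $\abs{p(x,\xi)}^2$. Since $\norm{T\phi_\mu}^2_{L^2}=\form{T^*T\phi_\mu,\phi_\mu}$, everything reduces to the coherent-state statement: \emph{for any order-zero \PsiDO $Q$ with principal symbol $q$, one has $\form{Q\phi_\mu,\phi_\mu}\to q(x_0,\xi_0)$ as $\lambda\to\infty$}; taking $Q=T^*T$ and $q=\abs{p}^2$ then gives the proposition. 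To prove it, conjugate by the rigid motion $M_{x_0,\xi_0}$ so that $(x_0,\xi_0)=(0,e_1)$ and $\phi_\mu=\phi_\lambda$; this replaces $Q$ by another order-zero \PsiDO whose principal symbol at $(0,e_1)$ still equals $q(x_0,\xi_0)$. Writing $Q$ as an $x$-dependent quantization and substituting the parabolic change of variables $x=(u_1/\lambda,u'/\sqrt\lambda)$, $\xi=(\lambda\zeta_1,\sqrt\lambda\,\zeta')$ dual to the dilation in~\eqref{e:parabolic-dilates}, the Jacobians cancel the normalizing powers of $\lambda$ and the phase $x\cdot\xi$ collapses to the $\lambda$-independent $u\cdot\zeta$, giving
\[
\form{Q\phi_\mu,\phi_\mu}=(2\pi)^{-n}\iint \conj{\phi(u)}\,e^{iu\cdot\zeta}\,q\Big(\tfrac{u_1}{\lambda},\tfrac{u'}{\sqrt\lambda};\,\lambda\zeta_1,\sqrt\lambda\,\zeta'\Big)\,\hat\phi(\zeta)\,d\zeta\,du.
\]

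On $\supp\hat\phi$ one has $\zeta_1\ge 1$ and $\abs\zeta$ bounded, so the frequency argument has magnitude $\sim\lambda$ with direction tending to $e_1$, while the spatial argument tends to $0$. By continuity and degree-zero homogeneity of the principal symbol (its lower-order part being $O(\lambda^{-1})$ there), the symbol factor converges to $q(0,e_1)=q(x_0,\xi_0)$ uniformly on the $\lambda$-independent support of the integrand. Dominated convergence then pulls the constant out, leaving
\[
\form{Q\phi_\mu,\phi_\mu}\to q(x_0,\xi_0)\,(2\pi)^{-n}\iint\conj{\phi(u)}\,e^{iu\cdot\zeta}\,\hat\phi(\zeta)\,d\zeta\,du=q(x_0,\xi_0)\norm{\phi}_{L^2}^2=q(x_0,\xi_0),
\]
which proves the lemma and hence the proposition.

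I expect the main obstacle to be this coherent-state limit, and within it the uniform control of the symbol under the anisotropic scaling: homogeneity collapses the large frequency to a direction, but one must verify that $q(u_1/\lambda,u'/\sqrt\lambda;\lambda\zeta_1,\sqrt\lambda\,\zeta')\to q(x_0,\xi_0)$ is \emph{uniform} across the compact $\zeta$-support and the rapidly decaying $u$-tails, and one must cleanly separate the principal part from the $O(\lambda^{-1})$ remainder so that dominated convergence applies. The remaining ingredient, the frequency-side concentration used to pass from $L^2(V)$ to $L^2(\RR^n)$, is routine given Lemma~\ref{l:post-fio-concentration} and the mapping of $\Xi_\mu$ under $\chi$.
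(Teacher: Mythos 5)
Your proposal is correct in substance, and it shares the paper's reduction skeleton but diverges at the key step. Like the paper, you drop $\rho_\mu$ via $(1-\rho_\mu)\phi_\mu\to 0$ and $L^2$-boundedness of $T$, reduce to computing $\lim_{\lambda\to\infty}\norm{T\phi_\mu}^2_{L^2(\RR^n)}$ using Lemma~\ref{l:post-fio-concentration}, and pass to $Q=T^*T\in\Psi^0$ with principal symbol $\abs{p}^2$. But where the paper then sandwiches $\form{\phi_\mu,Q\sigma\phi_\mu}$ between $\abs{p_0}^2\pm\eps$ by applying the sharp \Garding{} inequality to $(Q-\underline{q}I)\sigma$ and $(\overline{q}I-Q)\sigma$, with a shrinking phase-space cutoff $\sigma$ satisfying $(\Id-\sigma)\phi_\mu=0$ and with the error terms killed because $\phi_\mu\to 0$ in $H^{-s}$, you instead prove the coherent-state limit $\form{Q\phi_\mu,\phi_\mu}\to q(x_0,\xi_0)$ directly, via the parabolic change of variables and dominated convergence. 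Your route is more elementary (no \Garding{}) and gives a genuinely stronger conclusion --- the limit holds for an arbitrary classical $Q\in\Psi^0$, without any sign structure --- at the cost of exactly the bookkeeping you flag: uniform convergence of the homogeneous principal part under the anisotropic scaling, Schwartz-tail domination, and polyhomogeneity so that $q-q_0\in S^{-1}$. The paper's $\eps$-squeeze trades all of that for mere continuity of $\abs{p}^2$ near $(x_0,\xi_0)$ plus nonnegativity. Two small points to tighten. First, the transversal composition calculus gives $T^*T$ as a \PsiDO{} only modulo a smoothing remainder $K$; the pairing $\form{K\phi_\mu,\phi_\mu}$ must be disposed of separately, exactly as in the $H^{-s}$ estimate at the end of the proof of Lemma~\ref{l:post-fio-concentration}. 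Second, your microlocal reading of $L^2(V)$, with the frequency-side concentration of $T\phi_\mu$ argued ``by the same argument'' through $\chi(\Xi_\mu)$, is heuristic as stated (an FIO does not literally transport frequency supports by $\chi$; a rigorous version needs a pseudodifferential cutoff argument mirroring that lemma's proof) --- but it is also unnecessary, since the paper treats $V$ as a spatial neighborhood of $y_0$, and Lemma~\ref{l:post-fio-concentration} alone suffices for the localization step.
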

\begin{proof}
	Let $p_0=p(x_0,\xi_0)$, $\mu=\mu(\lambda)=(\lambda,x_0,\xi_0)$.
	Since $(1-\rho_\mu)\phi_\mu\to 0$ and because of Lemma~\ref{l:post-fio-concentration}, it suffices to prove this limit holds with a norm on all of $\RR^n$, that is,
	\begin{equation}
		\norm*{T\phi_\mu}^2_{L^2(\RR^n)}\to \abs{p_0}^2.
	\end{equation}
	Given $\eps>0$, there exists a $\lambda_0$ such that for all $\lambda\geq\lambda_0$,
	\begin{equation}
		\underline q = \abs{p_0}^2-\eps < \abs{p(x,\xi)}^2 < \abs{p_0}^2+\eps = \overline q
		\qquad
		\text{ for } x\in \clsr{U_{\lambda,x_0,\xi_0}},\,\xi\in\clsr{\Xi_{\lambda,x_0,\xi_0}}.
	\end{equation}
	Fix $\lambda_1>\lambda_0$, and choose a smooth conic cutoff $\alpha(x,\xi)$ supported in $U_{\lambda_0,x_0,\xi_0}\times\Xi_{\lambda_0,x_0,\xi_0}$ and equal to 1 on $U_{\lambda_1,x_0,\xi_0}\times\Xi_{\lambda_1,x_0,\xi_0}$. Let $\sigma u = (2\pi)^{-n} \int e^{ix\cdot\xi}\alpha(x,\xi)\hat u(\xi)\,d\xi$ be the associated pseudodifferential cutoff.
	
	Assuming from now on $\lambda>\lambda_1$, we note $(\Id-\sigma)\phi_\mu=0$.
	Letting $Q=T^*T\in\Psi^0$,
	\begin{align}
		\norm{T\phi_\mu}_{L^2}^2 = \form{\phi_\mu,Q\phi_\mu} = \form{\phi_\mu,Q\sigma\phi_\mu}.
	\end{align}
	Applying the sharp \Garding{} inequality to $(Q-\underline qI)\sigma$ and $(\overline qI-Q)\sigma$,
	\begin{align}
		\form{\phi_\mu,Q\sigma\phi_\mu} & \geq \underline q \norm{\phi_\mu}_{L^2}^2 + \form{\phi_\mu,\underline K\phi_\mu}, \\
		\form{\phi_\mu,Q\sigma\phi_\mu} & \leq \overline q \norm{\phi_\mu}_{L^2}^2 + \form{\phi_\mu,\overline K\phi_\mu},
	\end{align}
	for smoothing operators $\underline K$, $\overline K$. Arguing as in the proof of Lemma~\ref{l:post-fio-concentration}, $\underline K\phi_\mu, \overline K\phi_\mu\to 0$ as $\lambda\to\infty$. Hence
	\begin{equation}
		\abs{p_0}^2-\eps < \lim_{\lambda\to\infty} \norm{T\phi_\mu}_{L^2}^2 < \abs{p_0}^2+\eps,
	\end{equation}
	assuming the limit exists. Since $\eps$ was arbitrary, we conclude $\lim_{\lambda\to\infty} \norm{T\phi_\mu}_{L^2}^2=\abs{p_0}^2$.
\end{proof}

\subsection{Directly transmitted constituent of the parametrix}				\label{s:wave-ml}

For $T>0$, let $R_T$ be the solution operator for the wave equation~\eqref{e:wave-eqn} on $\RR^n$ with wave speed $c$.
As is well-known, $R_T$ is (away from glancing rays) the sum of graph FIOs associated with sequences of reflections and refractions. The first step is a microlocal diagonalization.

Let $B\in\Psi^1(\RR^n\setminus\Gamma)$ be a pseudodifferential square root of the elliptic spatial operator $-c^2\Delta$; choose a parametrix $B^{-1}\in\Psi^{-1}(\RR^n\setminus\Gamma)$. Away from $\Gamma$,
\begin{equation}
	\d_t^2-c^2\Delta		\eqml 	(\d_t-iB)(\d_t+iB).
\end{equation}
The factors $\d_t+iB$, $\d_t-iB$ are responsible for propagating singularities $(x,\xi)$ in the initial data forward and backward along bicharacteristics, respectively. If $u_\pm$ are solutions to $(\d_t\pm iB)u_\pm\eqml0$, then $u=u_++u_-$ solves $(\d_t^2-c^2\Delta)u\eqml0$. If $g_\pm=u_\pm(0,\cdot)$, then $u$ has Cauchy data $(h_{0},h_{1}) = (g_++g_-,\,iBg_+-iBg_-)$. Conversely, given $h=(h_0,h_1)$ and solving for $g_+$, $g_-$,
\begin{equation}
	\mat{g_-\\g_+} \eqml \frac12\mat{I  & \phantom{-} iB^{-1} \\ I & -iB^{-1}}\mat{h_{0}\\h_{1}}.
	\label{e:split-CD-from-standard}
\end{equation}
Let $\Lambda\colon(g_+,g_-)\mapsto(h_0,h_1)$. Then $R=\Lambda\smash{\smat{R^+\\&R^-}} \Lambda^{-1}$ for operators $R^+$ and $R^-$ which are order-0 FIOs away from glancing.

Given $y\in\Omega_r$, let $T=d(y,\bdy\Omega)$ and suppose $\gamma_y$ intersects $\Gamma$ exactly $k$ times.
Define $\dt^+(y)$ to be the principal symbol of the directly transmitted component $\RDT_k^+$ of $R^+$ at $(p(y),\nu)$, where $\nu$ is the inward-pointing normal covector at $p$. More precisely, in the notation of~\cite[Appendix A]{SC},
\begin{equation}
	\RDT_k^+ = \when{r_T \JCS, & k = 0,\\
				r_T\JBS\iota M\subT (\JBB\iota M\subT)^{k-1}\JCB, & k > 0.}
	\label{e:DT-op-def}
\end{equation}
It can be shown that
\begin{equation}
	\abs{\dt^+(y)} = \prod_{i=1}^k \frac{2\sqrt{\cot\alpha_i\cot\beta_i}}{\cot\alpha_i+\cot\beta_i},
	\label{e:dt-ps}
\end{equation}
where $\alpha_i$, $\beta_i$ are the angles between $\gamma'$ and the normal to $\Gamma$ at the $i\mith$ intersection of $\gamma$ with $\Gamma$.

\section{Interface recovery}      \label{s:interface-recovery}

We are now ready to apply the results of the previous sections and demonstrate how the discontinuities of $c$ can be located in boundary normal coordinates using outside measurements. The basic idea is to track the energy of a conormal wave packet as it travels into $\Omega$; each time it passes through a discontinuity in $c$ a known fraction of its energy is lost to reflection. As usual, a high-frequency limit is employed.


We begin with a result on recovery of the direct transmission's principal symbol, using wave packets.

\begin{theorem}
	\label{t:energy-ps-recovery}
	Let $y\in\Omega_r$, $p=p(y)$, $T=d(y,p)$, and let $\eps>0$ be sufficiently small. Then there exists a domain $\Theta\supset\Omega$ and a covector $(p^*\!,\nu^*)\in S^*\Theta$ such that
	\begin{align}
		\abs*{\dt^+(y)}^2 &= 	\lim_{\lambda\to\infty} \KE_{\Theta_{T+\eps}} R_{T+\eps} h_{0,\lambda},
		&
		h_{0,\lambda} &= \Lambda \mat{-icB^{-1}\rho_{\lambda,p^*\!,\nu^*}\phi_{\lambda,p^*\!,\nu^*}\\0}.
	\end{align}
\end{theorem}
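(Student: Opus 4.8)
The plan is to launch a forward-moving, kinetic-energy-normalized wave packet along the transmitted normal geodesic $\gamma_y$, use the regularity of $y$ to isolate the directly transmitted constituent $\RDT_k^+$ of the parametrix inside the deep region $\Theta_{T+\eps}$, and then read off $\abs{\dt^+(y)}$ from the transmitted kinetic energy via Proposition~\ref{p:principal-symbol-recovery}. First I would reduce to the half-wave propagator: since $h_{0,\lambda}=\Lambda(g_+,0)$ with $g_+=-icB^{-1}\rho_\mu\phi_\mu$ and $\mu=(\lambda,p^*\!,\nu^*)$, the block-diagonal form $R=\Lambda\,\mathrm{diag}(R^+,R^-)\Lambda^{-1}$ from~\eqref{e:split-CD-from-standard} gives $R_{T+\eps}h_{0,\lambda}=\Lambda(w,0)=(w,iBw)$ with $w=R^+_{T+\eps}g_+$, whence $\KE_{\Theta_{T+\eps}}R_{T+\eps}h_{0,\lambda}=\norm{c^{-1}Bw}^2_{L^2(\Theta_{T+\eps})}$. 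The factor $-icB^{-1}$ is chosen precisely so the incident packet has unit kinetic energy, since $c^{-1}B(-icB^{-1})$ has principal symbol $-i$.

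Second, the geometric setup. I would pick $\Theta\supset\Omega$ and $(p^*\!,\nu^*)\in S^*\Theta$ so that the purely transmitted bicharacteristic issuing from $(p^*\!,\nu^*)$ traverses (an extension of) $\gamma_y$, crosses the same $k$ interfaces, and reaches at time $T+\eps$ a point $x^*$ lying in the interior of $\Theta_{T+\eps}$; shrinking $\eps$ and invoking Lemma~\ref{l:past-focal}, no focal point intervenes, so $\gamma_y$ stays distance-minimizing and $x^*$ genuinely sits at depth exceeding $T+\eps$. Exactly as in the proof of Proposition~\ref{p:behavior-near-wf}, regularity of $y$ forces every graph-FIO component of $R^+_{T+\eps}$ other than $\RDT_k^+$ to carry $(p^*\!,\nu^*)$ to image points bounded away from $\clsr{\Theta_{T+\eps}}$: a ray suffering a reflection turns back toward the boundary and so, after total length $T+\eps$, lands at depth strictly below $T+\eps$, outside $\Theta_{T+\eps}$. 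Arguing as in Lemma~\ref{l:post-fio-concentration} (whose cutoff estimate is $O(\lambda^{-\infty})$ and hence beats the order-$1$ growth of $c^{-1}B$), the $L^2(\Theta_{T+\eps})$-mass of those components, and of $(1-\rho_\mu)\phi_\mu$, tends to zero. Thus $c^{-1}Bw=P\rho_\mu\phi_\mu+o(1)$ in $L^2(\Theta_{T+\eps})$, where $P=c^{-1}B\,\RDT_k^+(-icB^{-1})$.

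Third, I would conclude with the symbol calculus. The operator $P$ is an order-zero graph FIO whose symplectomorphism $\chi$ is the transmitted flow carrying $(p^*\!,\nu^*)$ to $(x^*,\eta^*)$, so Proposition~\ref{p:principal-symbol-recovery}, applied with a spatial neighborhood $V\subset\Theta_{T+\eps}$ of $x^*$, gives $\KE_{\Theta_{T+\eps}}R_{T+\eps}h_{0,\lambda}\to\abs{\sigma_P(p^*\!,\nu^*)}^2$. To evaluate $\sigma_P$ I would factor $P=(c^{-1}Bc)\,(c^{-1}\RDT_k^+c)\,(-iB^{-1})$. Because $B=\sqrt{-c^2\Delta}$ is self-adjoint for the weight $c^{-2}\,dx$, the half-wave propagator $R^+$ preserves $\int c^{-2}\abs{u}^2\,dx$; hence $c^{-1}R^+c$ is $L^2(dx)$-unitary and its directly transmitted piece $c^{-1}\RDT_k^+c$ has principal symbol of modulus $\abs{\dt^+(y)}$, given by the transmission-coefficient product~\eqref{e:dt-ps}. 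The flanking symbols are $c(x^*)\abs{\eta^*}$ (from $c^{-1}Bc$, at the target) and $1/c(p^*)$ (from $-iB^{-1}$, at the source, as $\abs{\nu^*}=1$); since $c\abs\xi$ is conserved along $\chi$, $c(x^*)\abs{\eta^*}=c(p^*)$ and the two telescope to modulus $1$. Therefore $\abs{\sigma_P(p^*\!,\nu^*)}^2=\abs{\dt^+(y)}^2$, as claimed.

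The main obstacle is the depth-separation step: making precise that the kinetic energy in the \emph{region} $\Theta_{T+\eps}$—rather than near a single point, as in Proposition~\ref{p:behavior-near-wf}—captures the directly transmitted packet and nothing else. This requires simultaneously placing $x^*$ strictly inside $\Theta_{T+\eps}$ and keeping all reflected images strictly outside, which is exactly the leverage the regular-point hypothesis provides; a secondary technical point is matching the half-density/symbol normalization of~\cite[Appendix A]{SC} (under which~\eqref{e:dt-ps} holds) to the plain-$L^2$ convention of Proposition~\ref{p:principal-symbol-recovery}, which is what makes the speed factors cancel so cleanly above.
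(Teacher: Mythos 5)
Your overall architecture matches the paper's: reduce to the forward half-wave propagator via $R=\Lambda\,\mathrm{diag}(R^+,R^-)\,\Lambda^{-1}$, conjugate the directly transmitted component to an order-zero graph FIO (the paper's $\widetilde\RDT^+=c^{-1}B\RDT^+cB^{-1}$, your $P$ up to a factor of modulus one), apply Proposition~\ref{p:principal-symbol-recovery}, and identify the resulting symbol with $\dt^+(y)$; your explicit telescoping of the flanking symbols via conservation of $c\abs{\xi}$ along the flow is in fact a more careful rendering of the paper's one-line remark that $\widetilde\RDT^+$ and $\RDT^+$ have the same principal symbol, and your placement of the packet behind the boundary agrees with the paper's choice $(p^*\!,\nu^*)=\gamma'^\flat(-\eps)$ with $\Theta=\Omega_{-2\eps}$.

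However, your depth-separation step contains a genuine gap --- one you name as ``the main obstacle'' but do not actually resolve. You dispose of the non-direct part of the wavefield by decomposing $R^+_{T+\eps}$ into graph-FIO components indexed by reflection histories and applying Lemma~\ref{l:post-fio-concentration} to each; but that decomposition is valid only away from glancing, and regularity of $y$ guarantees transversality only along the \emph{direct} branch $\gamma_y$. A reflected branch may perfectly well graze a further interface or $\bdy\Omega$, and on such branches $R^+$ is not a sum of graph FIOs, so neither Lemma~\ref{l:post-fio-concentration} nor a wavefront-image argument as in Proposition~\ref{p:behavior-near-wf} controls that portion of the energy. The paper flags precisely this (``If there were no glancing rays on any reflected branches, we could directly apply Proposition~\ref{p:principal-symbol-recovery}\dots Instead, we follow a more careful argument'') and escapes it non-microlocally: in the telescoping decomposition~\eqref{e:rt-decomposition} it inserts spatial bump functions $\alpha_i$ that capture each reflected wave just after the corresponding interface, before it can interact again, and then uses the fact that a reflected path is never distance-minimizing --- quantified uniformly as $d(\supp\alpha_i,\bdy\Theta)<t_i+\eps-\delta$ --- together with finite speed of propagation to conclude that the $m$ reflected terms vanish identically on $\Theta_{T+\eps}$, regardless of any subsequent glancing. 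To repair your argument you would need either this cutoff/finite-speed mechanism or an extra hypothesis excluding glancing on all reflected branches; as written, the assertion that the graph-FIO components other than $\RDT_k^+$ account for the remainder of the wavefield is false in general.
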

The key interest in Theorem~\ref{t:energy-ps-recovery} is that $\KE_{\Theta_{T+\eps}}R_{T+\eps}h_{0,\lambda}$ is the kinetic energy of the almost direct transmission of wave packet $h_{0,\lambda}$. With scattering control, it can be obtained from measurements outside $\Omega$~\cite[Props.\ 2.7, 2.8]{SC}.

According to~\eqref{e:dt-ps}, $\dt^+(y)$ is smooth (in fact, constant) along each normal broken geodesic, except at discontinuities in $c$. This means scattering control can recover the discontinuities of $c$ in boundary normal coordinates as a direct consequence of Theorem~\ref{t:energy-ps-recovery}, and this recovery is completely constructive.

\begin{mainthm}
Assume $c$ is discontinuous on $\Gamma$, and let $y\in \Omega_r$, $T=d(y,\bdy\Omega)$. Then the locations of the singularities intersected by the normal broken geodesic segment $\gamma_y$ (in geodesic normal coordinates) are uniquely determined by the outside measurement operator $\mathcal F$, and given by
\begin{equation}
	\gamma_y^{-1}(\Gamma) = \singsupp\big(\abs{\dt^+\circ \gamma_y}\big).
\end{equation}
\label{c:interface-location}
\end{mainthm}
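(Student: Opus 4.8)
The plan is to recover the function $s\mapsto\abs{\dt^+(\gamma_y(s))}$ along the normal geodesic segment from the data $\mathcal F$, and then read off the interface depths as its singular support using the explicit formula~\eqref{e:dt-ps}. Parametrize $\gamma_y$ by travel time, so that $\gamma_y(0)=p\in\bdy\Omega$, $\gamma_y(T)=y$, and observe that the crossing set $\gamma_y^{-1}(\Gamma)=\{s_1<\dotsb<s_k\}\subset(0,T)$ is finite, since $\gamma_y$ meets $\Gamma$ transversally (as $y$ is almost regular, $\gamma_y$ is nowhere demi-tangent to $\Gamma\cup\bdy\Omega$).

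First I would establish that every point $\gamma_y(s)$ with $s\in(0,T)\setminus\gamma_y^{-1}(\Gamma)$ is regular and carries boundary normal coordinates $(p,s)$. The backward segment $\gamma_y\big|_{[0,s]}$ is a distance-minimizing broken normal geodesic from $\gamma_y(s)$ to $\bdy\Omega$; arguing as in the proof of Lemma~\ref{l:convex-ae-regular}, a second minimizer would splice with $\gamma_y\big|_{[s,T]}$ into a minimizer to $y$ with a forbidden corner (if $\gamma_y(s)\notin\Gamma$) or a Snell's-law violation (if $\gamma_y(s)\in\Gamma$), contradicting the regularity of $y$; so the minimizer is unique, and non-demi-tangency is inherited from $\gamma_y$. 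By Lemma~\ref{l:past-focal}, a focal point at some $\gamma_y(s_0)$ off $\Gamma$ would prevent $\gamma_y$ from minimizing past $s_0$, contradicting $d(y,\bdy\Omega)=T$; hence $d\exp_{\bdy\Omega}$ is nonsingular at every $(p,s)$ with $s\in(0,T)\setminus\gamma_y^{-1}(\Gamma)$, and these points lie in $\Omega_r$.

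For each such $s$ I would then invoke Theorem~\ref{t:energy-ps-recovery} at the regular point $\gamma_y(s)$, whose coordinates $(p,s)$ are known: it expresses $\abs{\dt^+(\gamma_y(s))}^2$ as the high-frequency limit of the kinetic energy of a wave-packet almost direct transmission, recoverable from $\mathcal F$ by scattering control. This yields the full function $g(s):=\abs{\dt^+(\gamma_y(s))}$ on $(0,T)\setminus\gamma_y^{-1}(\Gamma)$ from the data. By~\eqref{e:dt-ps}, $g$ depends only on the number and the incidence/refraction angles of the crossings already passed; since these are fixed between consecutive crossings, $g$ is locally constant there, while at each $s_j$ it is multiplied by the transmission factor $\frac{2\sqrt{\cot\alpha_j\cot\beta_j}}{\cot\alpha_j+\cot\beta_j}$.

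The decisive observation is that this factor is genuinely $\neq 1$ at an interface: by the AM--GM inequality it is at most $1$, with equality precisely when $\alpha_j=\beta_j$, and Snell's law forces $\alpha_j=\beta_j$ only when $c$ is continuous across $\Gamma$ at the crossing. As $c$ is discontinuous on $\Gamma$, every crossing produces a strict downward jump of $g$. Since $g$ is therefore smooth (locally constant) off $\{s_1,\dotsc,s_k\}$ and has a genuine jump at each $s_j$---detectable because the one-sided limits of the recovered values differ---its singular support is exactly $\{s_1,\dotsc,s_k\}=\gamma_y^{-1}(\Gamma)$, giving the interface depths in boundary normal coordinates. I expect the main obstacle to be the regularity argument of the second step, namely confirming uniformly along the open segment that the minimizers remain unique and free of focal points, so that Theorem~\ref{t:energy-ps-recovery} applies at every interior point off $\Gamma$; the final symbol analysis is then immediate from~\eqref{e:dt-ps}.
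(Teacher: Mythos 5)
Your proposal is correct and takes essentially the same route as the paper, which treats Theorem~\ref{c:interface-location} as a direct consequence of Theorem~\ref{t:energy-ps-recovery} combined with the observation that, by~\eqref{e:dt-ps}, $\abs{\dt^+}$ is constant along the normal broken geodesic except at interface crossings. You additionally spell out two steps the paper leaves implicit---that every interior point $\gamma_y(s)$ off $\Gamma$ is regular (via the splicing argument from the proof of Lemma~\ref{l:convex-ae-regular} together with Lemma~\ref{l:past-focal}, so Theorem~\ref{t:energy-ps-recovery} applies at each depth $s$), and that the transmission factor in~\eqref{e:dt-ps} is strictly less than $1$ at a genuine discontinuity (AM--GM plus Snell's law, so each crossing really lies in the singular support)---and both of these fills are sound.
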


\begin{proof}[Proof of Theorem~\ref{t:energy-ps-recovery}]
We indicate just one method for choosing $\Theta$, noting that many others are possible. Namely, let $\Theta=\Omega_{-2\eps}$; that is, $\Theta$ is the $2\eps$-neighborhood of $\Omega$. Assume $\eps$ is sufficiently small that no two distinct geodesics normal to $\bdy\Omega\cap B_{4T}(p)$ intersect before reaching $\Theta$ (that is, no caustics form near $p$). Then $d^*(x,\Theta)=d^*(x,\Omega)+2\eps$ for any $x\in B_{2T}(p)\cap\Omega$.

We next choose the wave packet covector $(p^*,\nu^*)$. Define $\gamma$ as the maximal unit-speed geodesic with $\gamma(0)=p$ and $\gamma'(0)$ the inward normal to $\bdy\Omega$. Let $(p^*,\nu^*)=\gamma'^\flat(-\eps)$, and $\mu=\mu(\lambda)=(\lambda,p^*\!,\nu^*)$. For the rest of the proof, assume $\lambda$ is sufficiently large that $\supp\rho_\mu\subset\Theta\setminus\clsr\Omega$: the wave packet's cutoff lies inside the initial data region.

Now, we examine the energy distribution of the wavefields generated by corresponding wave packets at time $T$. In particular, we would like to show that the region $\Theta_{T+\epsilon}$, whose energy we probe with scattering control, contains only the directly transmitted component of the wavefield, in the high-frequency limit. If there were no glancing rays on any reflected branches, we could directly apply Proposition~\ref{p:principal-symbol-recovery} to conclude the proof. Instead, we follow a more careful argument.

To this end, we will decompose the energy of the wavefields generated by corresponding wave packets at time $T$. Since $y$ is a regular point, $\gamma$ intersects only finitely many interfaces, and each intersection is transversal. Let $\tilde t_1,\dotsc,\tilde t_m$ be the times of intersection. Let $\gamma_1,\dotsc,\gamma_m$ be the (unit-speed) reflected geodesics, parameterized so that $\gamma_i(\tilde t_i)=\gamma(\tilde t_i)$. Now choose slightly later times $t_1,\dotsc,t_m$ such that
\begin{equation}
	\tilde t_1 < t_1 < \tilde t_2 < t_2 < \dotsc < \tilde t_m < t_m < T.
\end{equation}
such that $\gamma_i$ intersects no interfaces in the time interval $(\tilde t_1,t_1]$. 

After each intersection, we capture the reflected energy with cutoffs $\alpha_1,\dotsc,\alpha_m$. Namely, let $\alpha_i$ to be a smooth bump function equal to 1 in a neighborhood of $\gamma_i(t_i)$ and supported away from $\Gamma\cup\{\gamma(t_i)\}$. Because $\gamma_i$ is not a minimal length path from $\gamma_i(t_i)$ to $\bdy\Theta$ we can choose $\supp\alpha_i$ small enough that $d(\supp\alpha_i,\bdy\Theta)<t_i+\eps-\delta$ for some $\delta>0$ (independent of $\eps$). Figure~\ref{f:theorem-c} illustrates the setup.

\begin{figure}
	\centering
	\includegraphics{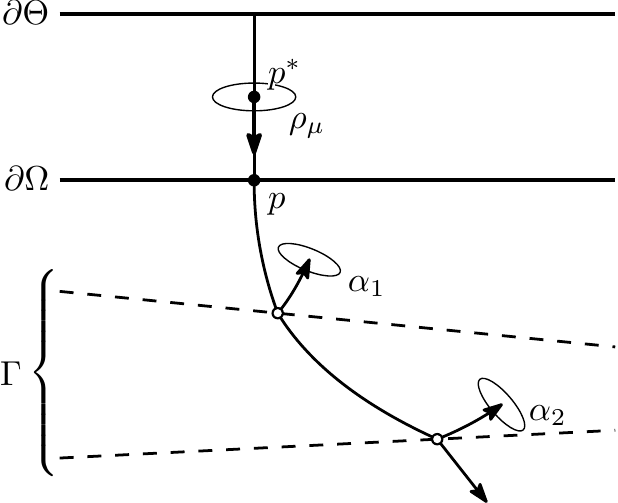}
	\caption{Cutoffs $\alpha_i$ used in the proof of Theorem~\ref{t:energy-ps-recovery}.}
	\label{f:theorem-c}
\end{figure}

Then we may divide $R_{T+\eps}$ into reflected and directly transmitted components as follows:
\begin{nalign}
R_{T+\eps} &= R_{t_2+\dotsc+t_m}\alpha_1 R_{t_1+\eps}\\
&\qquad + R_{t_3+\dotsc+t_m}\alpha_2 R_{t_2} (1-\alpha_1) R_{t_1+\eps}\\
&\qquad + R_{t_4+\dotsc+t_m}\alpha_3 R_{t_3} (1-\alpha_2) R_{t_2} (1-\alpha_1) R_{t_1+\eps}\\
&\qquad + \dotsb + \\
&\qquad + \alpha_m R_{t_m} (1-\alpha_{m-1}) R_{t_{m-1}} \dotsb (1-\alpha_1)R_{t_1+\eps} \\
&\qquad + (1-\alpha_m) R_{t_m} (1-\alpha_{m-1}) R_{t_{m-1}} \dotsb (1-\alpha_1)R_{t_1+\eps}.
\label{e:rt-decomposition}
\end{nalign}
Assuming now that $\eps$ is chosen smaller than $\delta$, finite speed of propagation ensures that the first $m$ (reflected) terms in~\eqref{e:rt-decomposition} vanish on $\Theta_{T+\eps}$, leaving only the final (transmitted) term.
\begin{equation}
	\RDT \eqml (1-\alpha_m) R_{t_m} (1-\alpha_{m-1}) R_{t_{m-1}} \dotsb (1-\alpha_1)R_{t_1+\eps},
\end{equation}
this equivalence modulo smoothing operators holding in a conic neighborhood of $(p^*,\nu^*)$.
A single graph FIO is required for applying Proposition~\ref{p:principal-symbol-recovery}, , so we define $\smash{\widetilde\RDT}^+=c^{-1}B\RDT^+cB^{-1}$, a graph FIO of order 0. Then the vanishing of the reflected terms in~\eqref{e:rt-decomposition} implies
\begin{equation}
	\KE_{\Theta_{T+\eps}} R_{T+\eps} \Lambda\mat{-icB^{-1}\rho_\mu\phi_\mu\\0}
	=
	\norm[\Big]{\smash{\widetilde\RDT}^+\rho_\mu\phi_\mu}_{L^2(\Theta_{T+\eps})}^2.
\end{equation}
By Proposition~\ref{p:principal-symbol-recovery},
\begin{equation}
	\lim_{\lambda\to\infty} \norm[\Big]{(\smash{\widetilde\RDT}^+\!+K)\rho_\mu\phi_\mu}_{L^2(\Theta_{T+\eps})}^2
	=
	\abs*{s(p^*\!,\nu^*)}^2,
\end{equation}
where $s$ is the principal symbol of $\smash{\widetilde\RDT}^+$, equal to that of $\RDT^+$. Since $R^+_\eps$ has a principal symbol of unity, $\abs{s(p^*\!,\nu^*)}^2=\abs{s(p,\nu)}^2=\abs{\dt^+(y)}^2$.
\end{proof}

\paragraph{Funding Acknowledgements:} P.~C.~and V.~K.~were supported by the Simons Foundation                 
under the MATH $+$ X program. M.~V.~dH.~was partially supported by the Simons Foundation                 
under the MATH $+$ X program, the National Science Foundation under                 
grant DMS-1559587, and by the members of the Geo-Mathematical Group at              
Rice University. G.~U.~is Walker Family Endowed Professor of Mathematics at the University of Washington, and is partially supported by NSF, a Si-Yuan Professorship at HKUST, and a FiDiPro Professorship at the Academy of Finland.

\bibliographystyle{siam}
\bibliography{ScatteringControlIP}

\end{document}